\numberwithin{equation}{section}
\DeclareFontShape{T1}{lmr}{b}{sc}{<->ssub*cmr/bx/sc}{}
\DeclareFontShape{T1}{lmr}{bx}{sc}{<->ssub*cmr/bx/sc}{}
\numberwithin{equation}{section}		
\numberwithin{figure}{section}			
\numberwithin{table}{section}			
\newcommand{\vect}[1]{\boldsymbol{\mathbf{#1}}}
\DeclareMathOperator{\C}{\mathbb{C}}
\newcommand{\N}{\mathbb{N}}
\newcommand{\R}{\mathbb{R}}
\newcommand{\p}{\partial}
\renewcommand{\epsilon}{\varepsilon}
\newcommand{\Bf}{\mathfrak{B}}
\renewcommand{\Re}{\mathrm{Re}}
\newcommand{\ds}{\displaystyle}
\renewcommand{\tilde}{\widetilde}
\renewcommand{\hat}{\widehat}
\renewcommand{\i}{\mathrm{i}}
\renewcommand{\d}{\,\mathrm{d}}
\renewcommand{\Re}{\mathfrak{Re}}
\renewcommand{\Im}{\mathfrak{Im}}
\newcommand{\Id}{\mathrm{Id}}
\crefname{proposition}{Proposition}{Propositions}
\crefname{equation}{}{}
\newtheorem{theorem}{Theorem}[section]
\newtheorem{proposition}[theorem]{Proposition}
\newtheorem{corollary}[theorem]{Corollary}
\theoremstyle{definition}
\newtheorem{definition}[theorem]{Definition}
\newtheorem{remark}[theorem]{Remark}
\crefname{assumption}{Assumption}{Assumptions}
\crefname{definition}{Definition}{Definitions}
\crefname{corollary}{Corollary}{Corollaries}
\crefname{enumi}{item}{items}
\begin{document}

\title{Spectra and pseudospectra of non-Hermitian Toeplitz operators: Eigenvector decay transitions in banded and dense matrices}
\title{Spectra and pseudospectra of non-Hermitian Toeplitz operators: Localisation transitions in banded and dense matrices}

\author[Y. de Bruijn]{Yannick de Bruijn}
\address{\parbox{\linewidth}{Yannick De Bruijn\\
Department of Mathematics, University of Oslo, Moltke Moes vei 35, 0851 Oslo, Norway.}}
\email{yannicd@math.uio.no}

\author[B. Davies]{Bryn Davies}
\address{\parbox{\linewidth}{Bryn Davies\\
Mathematics Institute, University of Warwick, Coventry CV4 7AL, UK.}}
\email{bryn.davies@warwick.ac.uk}

\author[S. Dupuy]{Sacha Dupuy}
\address{\parbox{\linewidth}{Sacha Dupuy\\
ENSTA, Institut Polytechnique de Paris, 91120 Palaiseau, France}}
\email{sacha.dupuy@ensta.fr}

\author[E. O. Hiltunen]{Erik Orvehed Hiltunen}
\address{\parbox{\linewidth}{Erik Orvehed Hiltunen\\
Department of Mathematics, University of Oslo, Moltke Moes vei 35, 0851 Oslo, Norway.}}
\email{erikhilt@math.uio.no}

\begin{abstract} 
    Using a generalised Floquet-Bloch theory, we present a mathematical method to construct eigenvectors for non-Hermitian Toeplitz operators. We extend the method to both banded Toeplitz operators and those with algebraically decaying, fully dense off-diagonal structure. We present sharp decay estimates for the amplitude of bulk eigenmodes as well as eigenmodes associated with defect eigenfrequencies inside the spectral band gap. The validity of those results is illustrated numerically and we show that banded approximations give poor reconstructions of the dense operators, due to the slow algebraic decay. We apply the insights gained to model the non-Hermitian skin effect in a three-dimensional system of subwavelength resonators, where the corresponding operator exhibits only algebraic decay of off-diagonal entries. We use our approach to demonstrate the fundamental mechanism responsible for the transition between the non-Hermitian skin effect and defect-induced localisation in the bulk.
    \end{abstract}
\maketitle
\vspace{3mm}
\noindent
\textbf{Mathematics Subject Classification (MSC2020): }15A18 
15B05, 
35C20, 
35P25, 
35B34. 

\vspace{3mm}
\noindent
\textbf{Keywords: } Toeplitz operators, algebraic off-diagonal decay, subwavelength resonators, non-Hermitian skin effect, non-Hermitian defected metamaterials, pseudospectra, complex Brillouin zone.

\vspace{5mm}

\section{Introduction}

Complex band structures and generalisations of Floquet-Bloch theory have proven to be powerful analytical tools for predicting the localisation of eigenvectors in tridiagonal operators \cite{debruijn2024complexbandstructuresubwavelength, debruijn2025complexbandstructurelocalisation, debruijn2025complexbrillouinzonelocalised, yao2018edge, imura2020generalized, song2019non, yang2020non}. In this work we seek to 
extend this framework to arbitrarily $m$-banded and dense Toeplitz operators, where the entries decay algebraically away from the diagonal. We will see that because this algebraic decay is relatively slow, truncating the matrix to a finite number of bands gives a poor approximation of the eigenvectors. In contrast, when the entries decay exponentially away from the diagonal, truncated versions give good approximations of the system, even when they are truncated to the extreme case of being tridiagonal. In such cases, all the off-diagonal elements must be accounted for, which makes the analysis inherently challenging. 
In this work, we develop theory that yields sharp estimates on the off diagonal decay of the inverse of $m$-banded Toeplitz operators, generalising the estimates presented in \cite{DemkoOffDiagonalDecay, JAFFARD1990461}.
Moreover, we show that analysis conducted on a semi-infinite Toeplitz operator may be applied to finite Toeplitz matrices by making a convergent pseudoeigenvector construction. This allows for accurate estimates of the eigenmodes of large but finite dense and $m$-banded Toeplitz matrices.

Dense operators with coefficients that decay only algebraically away from the diagonal are not just a mathematical curiosity, but appear in models for important physical systems. For example, we will apply our analytical results to systems of three-dimensional subwavelength resonators with directional damping terms, which are canonical models for the non-Hermitian skin effect in photonics or phononics. 
The non-Hermitian skin effect is the phenomenon in which all the bulk eigenmodes of a non-Hermitian system are localised at one edge of a finite-sized system. 
We will be particularly interested in resonator arrangements arranged in dimensionally deficient lattices. For example, resonators arranged periodically along one spatial dimension, whereas the physical problem is three-dimensional. Due to this deficiency, resonant modes may propagate in free space in directions orthogonal to the resonator chains and therefore have strong coupling with the far field. It is this long-range coupling that leads to the algebraically slow decay of off-diagonal coefficients. 
This article builds on previous work \cite{ammari2023nonhermitianskineffectthreedimensional}, which considers a three-dimensional system of subwavelength resonators, where the resonators interact with their $k$ nearest neighbours (giving a $k$-banded approximation of the operator). We substantially generalise these results and demonstrate that as the interaction range becomes large, the bulk eigenvectors transition from exponential to algebraic localisation. Moreover, we present a detailed study of defected banded Toeplitz operators. A novel discovery is that eigenmodes are first localised algebraically, locally around the defect site and only transition to exponential localisation further away from the defect site. This novel discovery opens the door to analytically study defect-induced localisation transitions in three-dimensional non-hermitian systems and derive analogue results to \cite{PhysRevB.111.035109} in three dimensions.
We numerically illustrate our findings and compare the localisation of bulk eigenmodes for banded and fully coupled systems.

The paper is organised as follows. In Section \ref{sec: Toeplitz theory}, we present a self-contained analysis of $m$-banded Toeplitz operators with algebraic off-diagonal decay. We recall some results on the spectrum of such operators and characterise the localisation of their eigenvectors. We conclude the section by demonstrating that the eigenvectors of $N$-banded Toeplitz operators provide algebraically good pseudoeigenvectors for finite dense $N\times N$ Toeplitz matrices. We illustrate our results numerically. In Section \ref{sec: Skin effect}, we demonstrate that the complex band structure can be employed to predict the eigenmode localisation in non-Hermitian resonator chains. We illustrate the findings numerically on the non-Hermitian skin effect in three dimensions with long-range couplings.
In Section \ref{Sec: concluding remarks}, we give concluding remarks and discuss future research directions and generalisations of the present work.
The \texttt{Matlab} code that supports the findings of this article is openly available in Section \ref{Sec: Data availability}.

\section{Toeplitz Operators with algebraic off-diagonal decay}\label{sec: Toeplitz theory}
This paper is devoted to the study of the spectra of Toeplitz operators and matrices. A Toeplitz operator is generated by the sequence $\{a_k\}_{k = -\infty}^\infty$ and is given by the semi-infinite matrix,
\begin{equation}\label{eq: def Toeplitz Operator}
    \mathbf{A} = \begin{pmatrix}
        a_0 & a_{-1} & a_{-2} & a_{-3} & \cdots\\
        a_1 & a_0    & a_{-1} & a_{-2} & \cdots\\
        a_2 & a_1 & a_0 & a_{-1} & \cdots \\
        a_3 & a_2 & a_1 &a_0 & \cdots\\
        \vdots & \vdots & \vdots & \vdots & \ddots
    \end{pmatrix}.
\end{equation}
In this paper, we consider only operators and matrices with real-valued coefficients. We say that a Toeplitz operator has \emph{algebraic off-diagonal decay} if
\begin{equation}\label{eq: algebraic off-diagonal decay coefficients}
    \begin{cases}
        a_k = \mathcal{O}(|k|^{-p}), &~k > 0,\\
        a_k = \mathcal{O}(|k|^{-q}), &~k < 0.
    \end{cases}
\end{equation}
We will assume without loss of generality that $p \leq q$, otherwise the adjoint matrix can be considered. 
In \cite{ToeplitzOriginalPaper} Otto Toeplitz proved that a Toeplitz operator \eqref{eq: def Toeplitz Operator} is bounded on $\ell^2$ if and only if there exists a function $f \in L^\infty$ whose sequence of Fourier coefficients is the generating sequence $\{a_k\}_{k = -\infty}^\infty$ for the Toeplitz matrix.
This function is commonly referred to as the \emph{symbol function} of the Toeplitz operator \eqref{eq: def Toeplitz Operator} and is given by the Laurent series
\begin{equation}\label{eq: def symbol function operator}
    f(z) = \sum_{k = -\infty}^\infty a_k z^{-k}.
\end{equation}
We also define an $m$-banded Toeplitz operator $\mathbf{A}_m$ by truncating the sequence:
\begin{equation} \label{eq: def banded Toeplitz Operator}
    (\mathbf{A}_m)_{i,j} = \begin{cases}
        a_{i-j}, &\text{ for } |i-j| < m,\\
        0, &\text{ for } |i-j| \geq m,
    \end{cases}
\end{equation}
in which case the symbol function is given by the finite Laurent polynomial,
\begin{equation}\label{eq: def symbol function banded}
    f_m(z) = \sum_{k = - m}^m a_k z^{-k}.
\end{equation}
In the sequel of this paper, we shall refer to the Toeplitz operator $\vect{A}$, specified in equation \eqref{eq: def Toeplitz Operator}, by the notation $\vect{A} = \vect{T}(f)$ and for the $m$-banded Toeplitz operator \eqref{eq: def banded Toeplitz Operator}, by $\mathbf{A}_m = \mathbf{T}(f_m)$. We note that the symbol function $f_m$ is a special case of $f$, obtained by setting $a_j = 0$ for all $|j| > m$. Consequently, the following results stated for the general symbol function $f$ also apply to $f_m$.
We now define the region of non-trivial winding. We let $\mathbb{T}$ denote the unit circle in $\mathbb{C}$, that is $\mathbb{T} = e^{-\i[0, 2\pi]}$, then the regions of positive and negative winding are defined as 
\begin{align}
    \sigma_{\mathrm{wind}}^+ &:= \bigl\{\lambda \in \C \setminus f(\mathbb{T}) : \operatorname{wind} \bigl(f(\mathbb{T}), \lambda \bigr) >0 \bigr\},\label{def: positive winding}\\
    \sigma_{\mathrm{wind}}^- &:= \bigl\{\lambda \in \C \setminus f(\mathbb{T}) : \operatorname{wind} \bigl(f(\mathbb{T}), \lambda \bigr) <0 \bigr\}\label{def: negative winding},
\end{align}
and the region of non-trivial winding is given by,
\begin{equation}\label{def: winding region}
     \sigma_\mathrm{wind} := \bigl\{\lambda \in \C \setminus f(\mathbb{T}) : \operatorname{wind} \bigl(f(\mathbb{T}), \lambda \bigr) \neq 0 \bigr\} = \sigma_{\mathrm{wind}}^+ \cup \sigma_{\mathrm{wind}}^-.
\end{equation}
We recall the following result of Gohberg \cite[Theorem 1.17]{LargeTruncatedToeplitz} that characterises the spectrum of Toeplitz operators.

\begin{theorem}[Gohberg]\label{thm: Gohberg Spectrum for baned Toeplitz operator}
    The operator $\mathbf{T}(f)$ is Fredholm on $\ell^2$ if  and only if $f(e^{\i\alpha}) \neq 0$ for all $ \alpha \in [0, 2\pi)$, in which case
    \begin{equation}
        \operatorname{Ind}\mathbf{T}(f) = - \operatorname{wind}\bigl(f(\mathbb{T}), 0 \bigr)
    \end{equation}
    and the spectrum is given by
    \begin{equation}
        \sigma\bigl(\mathbf{T}(f)\bigr) =  f(\mathbb{T}) \cup \sigma_\mathrm{wind},
    \end{equation}
    where $\sigma_\mathrm{wind}$ was defined in \eqref{def: winding region}.
\end{theorem}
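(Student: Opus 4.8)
The plan is to deduce the spectral description from the Fredholm criterion and the index formula, and to establish these in turn through a Wiener--Hopf factorisation of the symbol together with the multiplicative behaviour of Toeplitz operators under analytic factors. Throughout I would work in the Hardy-space picture, writing $\mathbf{T}(f) = P M_f P$, where $M_f$ is multiplication by $f$ on $L^2(\mathbb{T})$ and $P$ is the orthogonal (Riesz) projection onto the subspace $H^2$ of functions with non-negative Fourier coefficients; under the identification $H^2 \cong \ell^2$ this reproduces the matrix \eqref{eq: def Toeplitz Operator}. Since the decay \eqref{eq: algebraic off-diagonal decay coefficients} makes $\{a_k\}$ absolutely summable (once $p,q>1$), the symbol $f$ is continuous on $\mathbb{T}$, which is the regularity I will exploit.

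First I would prove the Fredholm criterion. The key algebraic identity is $\mathbf{T}(fg) = \mathbf{T}(f)\mathbf{T}(g) + H(f)H(\tilde g)$, where $H(\cdot)$ denotes the associated Hankel operators and $\tilde g(z) = g(1/z)$; for continuous symbols these Hankel operators are compact. Consequently $f \mapsto \mathbf{T}(f) + \mathcal{K}$ is a homomorphism into the Calkin algebra, so $\mathbf{T}(f)$ is Fredholm whenever $f$ is invertible in $C(\mathbb{T})$, i.e. whenever $f$ is non-vanishing on $\mathbb{T}$. For the converse I would exhibit a singular Weyl sequence concentrated near a zero $z_0$ of $f$ to show that $\mathbf{T}(f)$ fails to be Fredholm when $f(z_0)=0$.

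Next comes the index formula, which I regard as the crux. Given $f$ non-vanishing on $\mathbb{T}$, set $\kappa := -\operatorname{wind}\bigl(f(\mathbb{T}),0\bigr)$ and produce a Wiener--Hopf (Birkhoff) factorisation $f(z) = f_-(z)\, z^{\kappa} f_+(z)$, in which $f_+$ and $f_+^{-1}$ extend to bounded analytic functions inside the disc and $f_-$, $f_-^{-1}$ extend analytically outside it; this factorisation exists because $f z^{-\kappa}$ is non-vanishing with trivial winding and hence admits a continuous logarithm that splits additively into inner and outer parts. Because $H^2$ is invariant under multiplication by inner-analytic factors, the identities $\mathbf{T}(g f_+) = \mathbf{T}(g)\mathbf{T}(f_+)$ and $\mathbf{T}(f_- h) = \mathbf{T}(f_-)\mathbf{T}(h)$ hold exactly (the relevant Hankel terms vanish), giving $\mathbf{T}(f) = \mathbf{T}(f_-)\,\mathbf{T}(z^{\kappa})\,\mathbf{T}(f_+)$. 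The outer factors $\mathbf{T}(f_\pm)$ are invertible with inverses $\mathbf{T}(f_\pm^{-1})$, so $\operatorname{Ind}\mathbf{T}(f) = \operatorname{Ind}\mathbf{T}(z^{\kappa}) = \kappa = -\operatorname{wind}\bigl(f(\mathbb{T}),0\bigr)$, the middle equality being a direct computation for the shift power (here the orientation $\mathbb{T} = e^{-\i[0,2\pi]}$ is exactly what fixes the sign). The main obstacle is this factorisation step: establishing its existence together with the boundedness of the analytic factors, which is precisely where the continuity of $f$ and the absence of zeros on $\mathbb{T}$ are essential.

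Finally I would assemble the spectral statement by applying the above to $f - \lambda$. For $\lambda \in f(\mathbb{T})$ the symbol $f-\lambda$ vanishes on $\mathbb{T}$, so $\mathbf{T}(f-\lambda)$ is not Fredholm, hence not invertible, placing $\lambda$ in the spectrum. For $\lambda \notin f(\mathbb{T})$ the operator is Fredholm of index $-\operatorname{wind}\bigl(f(\mathbb{T}),\lambda\bigr)$. Here I would invoke Coburn's lemma --- a Toeplitz operator with non-zero symbol has trivial kernel or trivial cokernel --- so that index zero forces invertibility: such $\lambda$ lie outside the spectrum precisely when $\operatorname{wind}\bigl(f(\mathbb{T}),\lambda\bigr)=0$ and inside it exactly when the winding is non-zero, i.e. when $\lambda \in \sigma_{\mathrm{wind}}$. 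Combining the two cases yields $\sigma\bigl(\mathbf{T}(f)\bigr) = f(\mathbb{T}) \cup \sigma_{\mathrm{wind}}$.
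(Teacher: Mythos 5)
Your proposal is correct, and there is nothing in the paper to compare it against: the paper gives no proof of this statement, quoting Gohberg's theorem directly from the cited reference, and your argument --- the Toeplitz--Hankel identity with compact Hankel terms for the Fredholm criterion, Wiener--Hopf factorisation for the index formula, and Coburn's lemma to upgrade index zero to invertibility --- is precisely the canonical proof found in that source. Your opening observation that the algebraic decay with $p,q>1$ places the coefficients in $\ell^1$, making the symbol $f$ continuous on $\mathbb{T}$, correctly supplies the regularity hypothesis that the paper leaves implicit, and your attention to the sign conventions (the paper's symbol uses $z^{-k}$ and the parametrisation $\mathbb{T}=e^{-\i[0,2\pi]}$) is exactly what is needed for the index formula $\operatorname{Ind}\mathbf{T}(f) = -\operatorname{wind}\bigl(f(\mathbb{T}),0\bigr)$ to come out with the stated sign.
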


For later application, we will introduce the limiting spectra of large truncated Toeplitz operators.
To define a finite Toeplitz matrix, we introduce the projection operator,
\begin{align}
    \vect{P}_n: \ell^2(\N, \mathbb{C}) &\to \ell^2(\N, \mathbb{C})\\
    (x_1, x_2, x_3, \dots) &\mapsto (x_1, \dots, x_n, 0, 0, \dots).
\end{align}
A finite Toeplitz matrix is derived from a Toeplitz operator by performing the following truncation,
\begin{equation}\label{eq: truncate operator to matrix}
    \vect{T}_{n}(f) := \vect{P}_{n} \vect{T}(f) \vect{P}_{n}.
\end{equation}

The asymptotic eigenvalue distribution of large, finite truncated Toeplitz matrices is detailed in \cite[Section 5.8]{LargeTruncatedToeplitz}, with the foundational paper being \cite{Schmidt_Spitzer_1960}. The limiting spectrum of a finite but large Toeplitz matrix will be referred to as the \emph{open limit} and is defined as
\begin{equation}\label{def: open spectrum}
    \sigma_{\text{open}}\bigl(\mathbf{T}(f)\bigr) := \lim_{n \to\infty}\sigma\bigl(\mathbf{T}_n(f)\bigr).
\end{equation}
Moreover, by the definition of the open limit, it is not hard to see that $ \sigma_{\text{open}}\bigl(\mathbf{T}(f)\bigr) \subseteq \sigma\bigl(\mathbf{T}(f)\bigr)$.
It is characterised by the following result \cite{Schmidt_Spitzer_1960}.

\begin{theorem}[Schmidt-Spitzer]\label{thm: schmidt spitzer magnitude}
    Let the symbol function $f_m$ be as defined in \eqref{eq: def symbol function banded}. Suppose that the roots $z^p\bigl(f_m(z)-\lambda\bigr)$ are sorted in ascending magnitude, that is, $0< |z_1(\lambda)| \leq |z_2(\lambda)| \leq \dots \leq |z_{2m}(\lambda)|$, then
    \begin{equation}\label{eq: modulus definition}
        \lim_{n \to\infty}\sigma\bigl(\mathbf{T}_n(f_m)\bigr) = \bigl\{ \lambda \in \C ~:~ |z_{m}(\lambda)| = |z_{m+1}(\lambda)| \bigr\}.
    \end{equation}
\end{theorem}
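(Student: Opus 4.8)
The plan is to reduce the eigenvalue problem for the finite matrix $\mathbf{T}_n(f_m)$ to a single scalar secular equation and then analyse its zeros as $n\to\infty$. Multiplying the symbol by $z^m$, set $P_\lambda(z):=z^m\bigl(f_m(z)-\lambda\bigr)$, a polynomial of degree $2m$ whose roots are exactly $z_1(\lambda),\dots,z_{2m}(\lambda)$; since its constant term is $a_m$ and its leading coefficient is $a_{-m}$, all roots are finite and nonzero whenever $a_{\pm m}\neq 0$. Away from the two boundaries, the eigenvalue equation $\mathbf{T}_n(f_m)\v=\lambda\v$ is the constant-coefficient recurrence $\sum_{k=-m}^m a_k v_{i-k}=\lambda v_i$, whose characteristic equation is precisely $P_\lambda(z)=0$, and the $2m$ boundary-matching conditions produce a $2m\times 2m$ linear system. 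Solving it leads to the classical Widom--Day determinant formula, which expresses the characteristic polynomial of the truncation as a sum over the $m$-element subsets of the roots,
\begin{equation}
    \det\bigl(\lambda \mathbf{I} - \mathbf{T}_n(f_m)\bigr) = \sum_{\substack{S \subseteq \{1,\dots,2m\} \\ |S| = m}} c_S(\lambda)\prod_{j \in S} z_j(\lambda)^{\,n},
\end{equation}
where the coefficients $c_S(\lambda)$ are Vandermonde-type expressions in the roots that are independent of $n$.

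With this representation the limiting spectrum is governed by which terms dominate. Sorting $|z_1(\lambda)|\leq\dots\leq|z_{2m}(\lambda)|$, the modulus $\prod_{j\in S}|z_j(\lambda)|$ is largest for $S_0=\{m+1,\dots,2m\}$, and this maximiser is \emph{unique} precisely when $|z_m(\lambda)|<|z_{m+1}(\lambda)|$. Hence for the inclusion $\sigma_{\text{open}}\bigl(\mathbf{T}(f_m)\bigr)\subseteq\{|z_m|=|z_{m+1}|\}$ I would argue: if $\lambda_0$ satisfies the strict inequality, then by continuity of the roots the same strict inequality, together with $c_{S_0}(\lambda)\neq 0$, holds on a neighbourhood of $\lambda_0$ away from a lower-dimensional exceptional set, and there the single term $c_{S_0}(\lambda)\prod_{j\in S_0}z_j(\lambda)^n$ dominates the whole sum for $n$ large, so the determinant cannot vanish and no eigenvalues accumulate at $\lambda_0$.

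For the reverse inclusion I would show that eigenvalues accumulate at every point of the balance set $\Gamma:=\{\lambda:|z_m(\lambda)|=|z_{m+1}(\lambda)|\}$. On $\Gamma$ at least the subsets $S_0=\{m+1,\dots,2m\}$ and $S_1=\{m,m+2,\dots,2m\}$ contribute terms of equal modulus, so after dividing by the dominant modulus the normalised determinant is, to leading order, of the form $c_{S_0}(\lambda)+c_{S_1}(\lambda)\bigl(z_m(\lambda)/z_{m+1}(\lambda)\bigr)^n+(\text{smaller})$. Since $|z_m/z_{m+1}|=1$ on $\Gamma$, the phase $\bigl(z_m/z_{m+1}\bigr)^n$ sweeps the unit circle as $\lambda$ moves along $\Gamma$ and as $n$ grows, forcing the leading part to vanish for values $\lambda_n$ that are $\O(1/n)$-close to any prescribed point of $\Gamma$; a Rouch\'e/Hurwitz argument then upgrades these approximate zeros of the leading part to genuine zeros of the full determinant, i.e.\ to eigenvalues of $\mathbf{T}_n(f_m)$.

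The main obstacle is this reverse inclusion, and in particular making that last step uniform over all of $\Gamma$: the coefficients $c_S(\lambda)$ may vanish on lower-dimensional subsets (degenerating the naive two-term balance), and $\Gamma$ may contain points where three or more roots share the same modulus, so that several terms compete at once. The robust way to handle both is to pass to eigenvalue-counting measures and invoke the potential-theoretic description of Schmidt and Spitzer: one shows that the normalised potentials $\frac1n\log\bigl|\det(\lambda\mathbf{I}-\mathbf{T}_n(f_m))\bigr|$ converge to $\sum_{j=m+1}^{2m}\log|z_j(\lambda)|$ locally uniformly off $\Gamma$, that the limiting function fails to be harmonic exactly across $\Gamma$, and that the support of the associated limiting measure, which is where eigenvalues accumulate, is therefore $\Gamma$. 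I would use a normal-families argument to make this convergence rigorous and to conclude that the exceptional coefficient sets and higher-order modulus crossings remove no part of $\Gamma$ from the limiting spectrum.
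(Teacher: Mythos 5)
First, a point of order: the paper does not prove \Cref{thm: schmidt spitzer magnitude} at all. It is stated as a recalled classical result, attributed to \cite{Schmidt_Spitzer_1960} and \cite[Section 5.8]{LargeTruncatedToeplitz}, and the paper moves on immediately. So there is no internal proof to compare yours against; what can be judged is whether your proposal is a viable proof of the classical theorem. It essentially is: your architecture --- (i) pass to $P_\lambda(z)=z^m\bigl(f_m(z)-\lambda\bigr)$ (your reading of the statement's typographical ``$z^p$'' is the correct one), (ii) Widom's subset-sum formula for $\det\bigl(\lambda\mathbf{I}-\mathbf{T}_n(f_m)\bigr)$, (iii) single-term dominance giving $\limsup_n\sigma\bigl(\mathbf{T}_n(f_m)\bigr)\subseteq\Gamma$, and (iv) eigenvalue-counting measures and potential theory giving $\Gamma\subseteq\liminf_n\sigma\bigl(\mathbf{T}_n(f_m)\bigr)$ --- is precisely the Schmidt--Spitzer/Hirschman route as presented in the literature the paper cites. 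The two inclusions are also exactly what is needed to make the set limit in \eqref{def: open spectrum} well defined, a point your structure handles implicitly.

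Two places in the sketch are too loose as written, though both are repairable and both are dealt with in the cited sources. For the outer inclusion, ``away from a lower-dimensional exceptional set'' does not suffice: to rule out eigenvalues accumulating at $\lambda_0$ you need the dominance of the $S_0$-term to hold \emph{uniformly on a full neighbourhood} of $\lambda_0$, and the exceptional set (collisions among the top $m$ roots, where the plain Widom formula degenerates and individual coefficients $c_S$ blow up while the sum stays finite) may accumulate at $\lambda_0$ itself. The fix is the confluent version of Widom's formula, or grouping terms by modulus so that the top group is treated as a single bounded quantity; then dominance is uniform on compact subsets of $\{\lambda: |z_m(\lambda)|<|z_{m+1}(\lambda)|\}$. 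For the inner inclusion, you correctly observe that the two-term balance degenerates where coefficients vanish or where three or more roots share the critical modulus, and you fall back on potential theory; that fallback \emph{is} the real proof, but it needs one further structural fact to close: the support of the limiting measure is all of $\Gamma$ only because the one-sided harmonic limits of $\tfrac1n\log\lvert\det(\lambda\mathbf{I}-\mathbf{T}_n(f_m))\rvert$ genuinely differ across each arc of $\Gamma$, which rests on Schmidt and Spitzer's result that $\Gamma$ is a finite union of analytic arcs with empty interior. Without that, the limiting measure could a priori miss part of $\Gamma$, and your normal-families argument alone would not exclude it. With those two repairs, the proposal is a faithful reconstruction of the classical proof.
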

As demonstrated in \cite[Section 3.1]{ammari2024spectra}, tridiagonal Toeplitz operators can always be symmetrized by rescaling the unit circle: if we let $f_r(z) = f(rz)$, there is some $r>0$ such that $\mathbf{T}(f_r)$ is Hermitian and $\sigma(\mathbf{T}(f_r)) = \sigma_\mathrm{open}(\mathbf{T}(f))$.  However, for general $m$-banded Toeplitz operators, we shall see that this condition does not hold: generally, no fixed $r$ exists for which the symbol function $f_r$ generates a Hermitian Toepltiz operator. 

\subsection{Eigenvectors of m-banded Toeplitz operators}\label{sec: Eigenvector construction}Based on the characterisation of the spectrum of Toeplitz operators, we will now present an eigenvector construction for $m$-banded Toeplitz operators. We will use the complex band structure, which generalises the classical Floquet-Bloch band structure by introducing a complex-valued quasiperiodicity. Let $z = e^{\i(\alpha + \i\beta)}$ so that the complex quasiperiodicity $\beta$ captures the magnitude of $z$.

The numerical illustrations in this paper will be carried out for pseudo-Hermitian matrices, i.e. matrices which have real-valued spectra.
This motivates the definition of the complex band structure for real-valued frequencies. 
\begin{definition}
    The \emph{complex band structure} of an $m$-banded Toeplitz operator is parametrised by $\lambda\in\R$ through the $2m$ roots $z_i$ of the polynomial equation
    \begin{equation}\label{eq:poly}
        f_m(z_i) - \lambda = 0,~i \in \{1, 2m\},
    \end{equation}
    such that the different branches are given by $\alpha_i(\lambda)$ and $\beta_i(\lambda)$, where each root satisfies
    \begin{equation}
        z_i = e^{\mathrm{i}(\alpha_i + \mathrm{i}\beta_i)}.
    \end{equation}
\end{definition}

Note that  \eqref{eq:poly} is equivalent to  a polynomial of degree $2m$ and hence has (up to multiplicity) $2m$ roots $z_i$, $i = \{1, \dots, 2m\}$ for any fixed $\lambda\in \mathbb{C}$. The next result shows that any $\lambda \in \mathbb{C}$ is a generalised eigenvalue of $\mathbf{T}(f_m)$ in the sense that we can find $\vect{v}$ (not necessarily in $\ell^2$) satisfying  $\mathbf{T}(f_m)\vect{v} = \lambda \vect{v}$. 

    \begin{theorem}\label{thm: eigvec m banded construction} Let $\lambda \in \C$ such that $f_m(z) = \lambda$, then there exists an eigenvector $\vect{v}$ such that $\mathbf{T}(f_m)\vect{v} = \lambda \vect{v}$, which satisfies    
    \begin{equation}\label{eq: asymptotic eigenvector construction}
        \frac{|\vect{v}^{(i+n)}|}{|\vect{v}^{(i)}|} = \mathcal{O}\bigl(|z_1|^n + \dots + |z_{m+1}|^n \bigr) = \mathcal{O}\bigl( |z_{m+1}|^n \bigr),
    \end{equation}
    where $z_i, ~i\in\{1, 2m\}$ are the $2m$ roots of the equation $f_m(z) = \lambda$ that are sorted in ascending magnitude, i.e. $|z_1| \leq |z_2| \leq \dots\leq|z_{2m}|$.
\end{theorem}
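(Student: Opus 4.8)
The plan is to reformulate the semi-infinite eigenvalue problem as a scalar linear recurrence with a zero boundary extension, and then to build a solution out of the $m+1$ smallest characteristic roots, using a dimension count to guarantee that a nontrivial such combination exists. First I would extend any candidate eigenvector by setting $\vect{v}^{(j)} = 0$ for $j \leq 0$, so that $\mathbf{T}(f_m)\vect{v} = \lambda\vect{v}$ becomes
\begin{equation}
  \sum_{k=-m}^{m} a_k\, \vect{v}^{(i-k)} = \lambda\, \vect{v}^{(i)}, \qquad i \geq 1.
\end{equation}
For $i \geq m+1$ every index $i-k$ is positive, so this is the genuine bulk recurrence whose characteristic polynomial is $z^{m}\bigl(f_m(z)-\lambda\bigr)$; its $2m$ roots are exactly the $z_1,\dots,z_{2m}$ of the statement. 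Hence, for distinct roots, any sequence $\vect{v}^{(j)} = \sum_{l} c_l\, z_l^{\,j}$ automatically satisfies the recurrence for all $i \geq m+1$, and the entire difficulty is concentrated in the $m$ truncated rows $i = 1,\dots,m$.

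Next I would show that these $m$ boundary rows are equivalent to the vanishing of the extended sequence at the $m$ non-positive sites it would otherwise occupy. Writing $\hat{\vect{v}}^{(j)} = \sum_l c_l z_l^{\,j}$ for all $j \in \Z$ and using that $\hat{\vect{v}}$ solves the two-sided recurrence, the row-$i$ equation for $1 \leq i \leq m$ reduces to $\sum_{k=i}^{m} a_k\, \hat{\vect{v}}^{(i-k)} = 0$; solving these from $i=m$ downward (and using $a_m \neq 0$, which is implicit in $|z_1|>0$) shows that they are equivalent to the clean system
\begin{equation}
  \hat{\vect{v}}^{(0)} = \hat{\vect{v}}^{(-1)} = \dots = \hat{\vect{v}}^{(-(m-1))} = 0,
\end{equation}
i.e. $m$ homogeneous linear conditions on the coefficients $(c_l)$.

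The crucial step is then a dimension count. I would retain only the $m+1$ smallest roots, setting $c_{m+2} = \dots = c_{2m} = 0$, which leaves $m+1$ unknowns $c_1,\dots,c_{m+1}$ subject to the $m$ homogeneous conditions above. Since there are strictly more unknowns than equations, a nontrivial solution $(c_1,\dots,c_{m+1}) \neq 0$ always exists, producing a nonzero formal eigenvector $\vect{v}^{(j)} = \sum_{l=1}^{m+1} c_l z_l^{\,j}$. The estimate then follows directly: for any base index $i$ with $\vect{v}^{(i)}\neq 0$,
\begin{equation}
  \bigl|\vect{v}^{(i+n)}\bigr| \leq \sum_{l=1}^{m+1} |c_l|\,|z_l|^{\,i}\,|z_l|^{\,n} = \mathcal{O}\bigl(|z_1|^n + \dots + |z_{m+1}|^n\bigr) = \mathcal{O}\bigl(|z_{m+1}|^n\bigr),
\end{equation}
since $|z_{m+1}|$ dominates the retained moduli, giving $|\vect{v}^{(i+n)}|/|\vect{v}^{(i)}| = \mathcal{O}(|z_{m+1}|^n)$ as claimed.

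The main obstacle I anticipate is the degenerate case of repeated roots. When some $z_l$ coincide, the pure exponentials must be replaced by confluent solutions $j^{s} z_l^{\,j}$, and the vanishing conditions become a confluent, Vandermonde-type system; I would need to verify that the dimension count still yields a nontrivial kernel and that the polynomial prefactors do not spoil the bound, being absorbed into the $\mathcal{O}$-estimate at worst by a factor subexponential in $n$. I would also record the mild nondegeneracy hypotheses $a_{-m}\neq 0$ and $a_m \neq 0$ that guarantee exactly $2m$ nonzero roots, and note that $\vect{v}$ is genuinely nonzero because the distinct (or confluent) characteristic solutions are linearly independent as sequences.
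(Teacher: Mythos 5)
Your proposal is correct and follows essentially the same route as the paper's own proof: an exponential ansatz built from the roots of $f_m(z)=\lambda$, which satisfies the eigenvalue equation in every row beyond the first $m$, followed by a linear combination of the $m+1$ smallest-modulus solutions to handle the $m$ defective rows, with confluent (Vandermonde-type) solutions invoked for repeated roots. Your explicit dimension count ($m+1$ free coefficients against $m$ homogeneous boundary conditions, recast as vanishing of the extended sequence at the sites $0,-1,\dots,-(m-1)$) merely supplies the justification that the paper leaves implicit when it asserts that such a linear combination yields an exact eigenvector.
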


\begin{proof}
The underlying principle hinges on the fact that the symbol function has the same structure as vector matrix multiplication. Let $\vect{v}$ be such that $\mathbf{T}(f)\vect{v} = \lambda \vect{v}$.
Let us make the ansatz $\vect{v} = (1, z^2, z^3, \dots)$ and let $j > m$, then the $j$-th entry of the matrix vector multiplication is of the form
\begin{equation}
    \bigl(\mathbf{T}(f)\vect{v}\bigr)_j = \sum_{i = 1}^\infty \mathbf{T}_{ji}\vect{v}_i
     = \sum_{i = 0}^\infty a_{j-i}z^i 
     = z^j\sum_{i = 0}^\infty a_{j-i}z^{i-j} 
     = z^j \sum_{i = -m}^m a_{-i}z^i = \vect{v}_j f(z).
\end{equation}
This means that, if $z_i$ be a root of $f_m(z_i) = \lambda$, then the vector 
    \begin{equation}
        \vect{v}^i := (\lambda z_i^0, \lambda z_i^1, \lambda z_i^2, \cdots)
    \end{equation}
    satisfies the eigenvalue problem
    \begin{equation}
        T(f)\vect{v}^i = \lambda \vect{v}^i
    \end{equation}
    everywhere but in the first $m$ rows.
    Suppose first that all the roots are distinct, i.e. $z_j \neq z_i, \forall ~ i \neq j$ then a linear combination of $\vect{v}^i$, $i\in \{1, \dots, m+1\}$, yields a vector $\vect{v}$ which is an exact eigenvector for $T(f_m)\vect{v} = \lambda \vect{v}$. This completes the proof under the assumption that the roots are distinct.
    In the case where some of the roots become confluent for some $\lambda$, a confluent Vandermonde matrix approach similar to the proof of \cite[Theorem A.3.]{ammari2023nonhermitianskineffectthreedimensional} may be employed to recover $m+1$ linearly independent eigenvectors $\vect{v}^1, \dots \vect{v}^{m+1}$, which also proves the result in the confluent case.
    From the construction of $\vect{v}$ it is not hard to see that
    \begin{equation}
        \frac{|\vect{v}^{(i+n)}|}{|\vect{v}^{(i)}|} = \mathcal{O}\bigl(|z_1|^n + \dots + |z_{m+1}|^n \bigr) = \mathcal{O}\bigl( |z_{m+1}|^n \bigr),
    \end{equation}
    which completes the proof.
\end{proof}

The estimate for the exponential decay rate of the bulk eigenvectors of non-Hermitian $m$-banded Toeplitz operators \eqref{eq: banded eigenvector decay estimate} is illustrated in Figure \ref{Fig: Complex band structure and decay lengths}, where we plot the complex band structure of a Toeplitz operator and compare with finite truncations.

To estimate eigenvector asymptotics, it is essential to identify the root $z_{m+1}$. For a frequency within the region of positive winding defined in \eqref{def: positive winding}, we observe the following result.

    \begin{theorem}\label{lem: magnitude of roots}
        An eigenvalue $\lambda$ belongs to $\sigma_{\operatorname{wind}}^+$ if and only if there exist $m+1$ roots of the equation $f_m(z) - \lambda = 0$ satisfying $|z_i| < 1$ for $i \in \{0, \dots, m+1\}$.
    \end{theorem}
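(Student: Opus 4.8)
The plan is to read the winding number off the analytic structure of $f_m$ via the argument principle. The key structural observation is that, because the symbol is the Laurent polynomial $f_m(z) = \sum_{k=-m}^{m} a_k z^{-k}$, the function $g(z) := f_m(z) - \lambda$ is meromorphic on a neighbourhood of the closed unit disc with a single pole, located at the origin and of order $m$ (assuming genuine bandwidth, i.e. $a_m \neq 0$). Multiplying by $z^m$ clears this pole and produces the degree-$2m$ polynomial $P(z) = z^m\bigl(f_m(z) - \lambda\bigr)$, whose $2m$ zeros are exactly the roots $z_1, \dots, z_{2m}$ of $f_m(z) = \lambda$. Since $\lambda \in \sigma_{\mathrm{wind}}^+ \subseteq \C \setminus f_m(\mathbb{T})$ by definition \eqref{def: positive winding}, none of these roots lies on $\mathbb{T}$, so the winding number is well defined and the roots split unambiguously into those inside and those outside the unit circle.

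First I would apply the argument principle to $g$ on the unit disc. The number of poles of $g$ enclosed by $\mathbb{T}$ is $m$, while the number of enclosed zeros is precisely $Z := \#\{ i : |z_i| < 1\}$, the count of interior roots. Tracking the orientation fixed by the parametrisation $\mathbb{T} = e^{-\i[0,2\pi]}$, this yields
\begin{equation*}
    \operatorname{wind}\bigl(f_m(\mathbb{T}), \lambda\bigr) = Z - m .
\end{equation*}
Consequently $\lambda \in \sigma_{\mathrm{wind}}^+$, i.e. $\operatorname{wind}(f_m(\mathbb{T}),\lambda) > 0$, holds if and only if $Z \geq m+1$; that is, if and only if at least $m+1$ of the roots of $f_m(z) = \lambda$ satisfy $|z_i| < 1$. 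In the ascending ordering $|z_1| \leq \dots \leq |z_{2m}|$ this is exactly the condition $|z_{m+1}| < 1$, which reconnects the statement with the decay rate $|z_{m+1}|^n$ appearing in Theorem~\ref{thm: eigvec m banded construction}.

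I expect the main obstacle to be pinning down the global sign of the winding number rather than the count itself: the argument principle delivers the relation only up to the orientation of the contour, and the $z^{-k}$ convention in the symbol together with the clockwise parametrisation $e^{-\i[0,2\pi]}$ must be threaded through carefully to land on $Z - m$ and not $m - Z$. A robust way to fix the sign, which I would include as a sanity check, is to anchor the identity at $\lambda \to \infty$: a dominant-balance analysis of $P(z)$ shows that exactly $m$ roots tend to $0$ and $m$ roots tend to $\infty$, so $Z = m$ while $\operatorname{wind} = 0$, matching $\operatorname{wind} = Z - m$. As $\lambda$ crosses a smooth arc of $f_m(\mathbb{T})$, a single root passes through $\mathbb{T}$; because $f_m$ is holomorphic and hence locally orientation-preserving, the jump in $Z$ and the jump in the winding number have matching signs, so $\operatorname{wind} - (Z-m)$ stays constant across all crossings and, being $0$ at infinity, vanishes on all of $\C \setminus f_m(\mathbb{T})$. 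Finally, the confluent case in which some roots coincide is handled by the usual perturbation argument and does not affect the interior-root count.
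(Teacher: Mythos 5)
Your proposal is correct and takes essentially the same route as the paper's own proof: the argument principle applied to the meromorphic function $f_m(z)-\lambda$ on the unit disc, whose only pole is the order-$m$ pole at the origin, yielding $\operatorname{wind}\bigl(f_m(\mathbb{T}),\lambda\bigr) = Z - m$ with $Z$ the number of roots inside $\mathbb{T}$, and hence the equivalence with $Z \geq m+1$. Your extra bookkeeping on the orientation convention and the sanity check anchoring the identity at $\lambda \to \infty$ are refinements the paper omits, but they do not alter the substance of the argument.
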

    \begin{proof}
        The function $f_m(z)-\lambda$ is a meromorphic function in $\C$, which has a pole of order $m$ at $0$. By the argument principle it holds that 
        \begin{equation}\label{eq: winding argument principle}
            \operatorname{wind}\bigl(f_m(\mathbb{T}), \lambda \bigr) = R - P, 
        \end{equation}
        where $R$ is the number of roots of $f_m(z) - \lambda = 0$ with $|z| < 1$ and $P$ is the number of poles of $f_m(z)-\lambda$ for $|z| < 1$, which for the given function is equal to $m$.
        Thus, for each $\lambda \in \sigma_{\operatorname{wind}}^+$, it follows that $R > P = m$.
        On the other hand if $f_m(z) - \lambda = 0$ has $m+1$ roots satisfying $|z_i| < 1$, then by \eqref{eq: winding argument principle} it holds that $\operatorname{wind}\bigl(f_m(\mathbb{T}), \lambda \bigr) \neq 0$, which implies that $\lambda \in \sigma_{\mathrm{wind}}^+$, which completes the proof.
    \end{proof}

As a direct consequence of Theorem \ref{lem: magnitude of roots}, it is not hard to see that for an eigenvector $\vect{v}$ associated to an eigenvalue $\lambda \in \sigma_{\operatorname{wind}}^+$, the vector is exponentially decaying. Theorem \ref{lem: magnitude of roots} also gives a characterisation of the skin effect in terms of the complex band structure by counting the number of positive branches. As we will see in Section \ref{sec: derivation of the discrete Green's function}, for a defect induced frequency outside of the region of positive winding, this may induce a localisation transition from an exponentially skin localised defect mode to an algebraically bulk localised defect mode.

\begin{figure}[htb]
    \centering
    \subfloat[][$m=4$]
  {\includegraphics[width=0.45\linewidth]{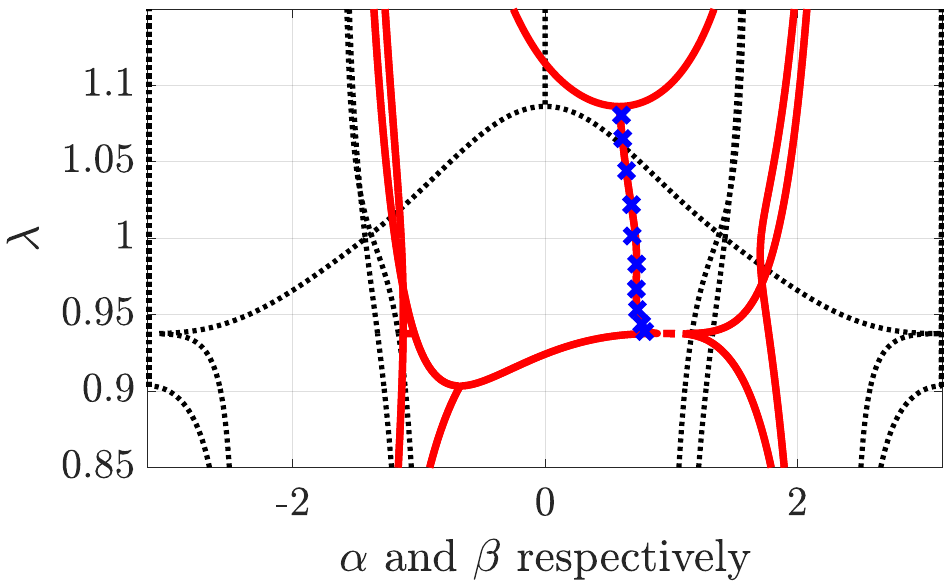}}\quad
    \subfloat[][$m=6$]%
    {\includegraphics[width=0.45\linewidth]{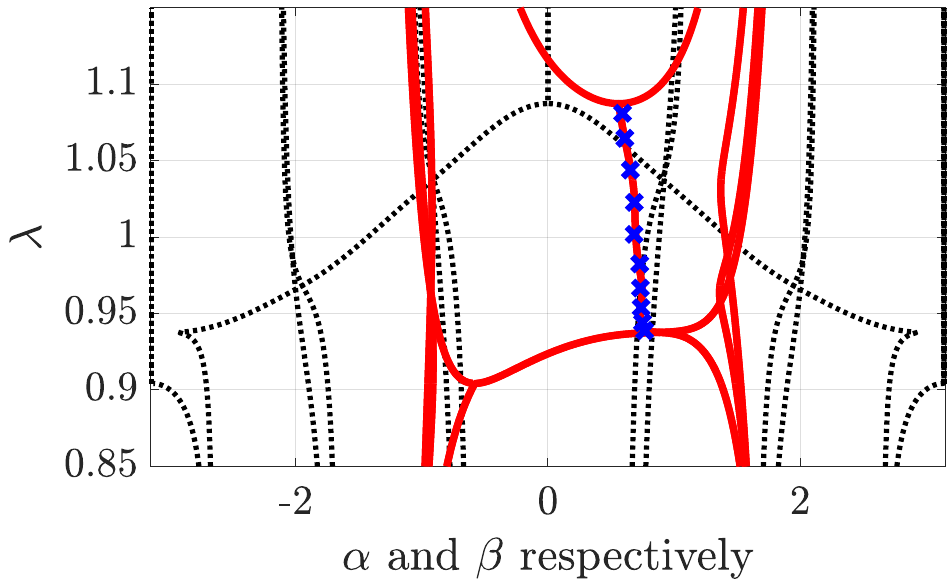}}
    \caption{We plot the complex band structure of two different $m$-banded, pseudo-Hermitian Toeplitz operators. Here, the real part $\alpha$ (black dashed lines) describes the local oscillations, while the imaginary part $\beta$ (red solid lines) describes the exponential decay of the eigenmodes. The decay length of the eigenmodes of a finite truncation of the operator (blue crosses) agree remarkably well with the complex band structure as asserted by Theorem \ref{thm: eigvec m banded construction}. Unlike the tridiagonal case, the decay length of the eigenmodes is frequency dependent.}
    \label{Fig: Complex band structure and decay lengths}
\end{figure}  

\subsection{Spectral behaviour under band truncation}
We will now explore how truncating the symbol function to a finite bandwidth affects the complex band structure, and therefore the localisation properties of the eigenvectors. We consider a full Toeplitz matrix with algebraic decay as defined in \eqref{eq: algebraic off-diagonal decay coefficients}.
An $m$-banded approximation for dense Toeplitz matrices is equivalent to truncating the symbol function, that is, 
\begin{equation}\label{eq: truncated symbol function}
    f(z) = \underbrace{\sum_{j = - m}^m a_j z^{-j} }_{=: f_m(z)}+ \sum_{j = m+1}^\infty (a_jz^{-j} + a_{-j}z^j)
\end{equation}
In a banded approximation, we seek to approximate the roots of full Laurent series $f(z)-\lambda$ by the roots of the finite Laurent polynomial $f_m(z) - \lambda$. Suppose that $z_0$ is a root of the finite Laurent polynomial $f_m(z_0) - \lambda$, then the following bound holds:
\begin{equation}\label{eq: estimate band aprx}
    \lvert f(z_0) - \lambda \rvert \leq \Bigl\lvert f_m(z_0) - \lambda + \sum_{j = m+1}^\infty (a_jz_0^{-j} + a_{-j}z_0^j) \Bigr\rvert 
    = \Bigl\lvert \sum_{j = m+1}^\infty (a_jz_0^{-j} + a_{-j}z_0^j) \Bigr\rvert.
\end{equation}
In the sequel, we will investigate how the complex band structure behaves in the limit as $m \to \infty$.

\begin{corollary}\label{thm: decay rate with number of bands}
    Let $\lambda \in \C$ and let $z_{0,m}$ be any root of $f_m(z)- \lambda$. Then $\lim_{m\to\infty}\lvert z_{0,m} \rvert = 1$.
\end{corollary}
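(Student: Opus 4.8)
The plan is to clear the pole at the origin and analyse the resulting polynomial. Setting $P_m(z) := z^m\bigl(f_m(z)-\lambda\bigr)=\sum_{k=0}^{2m}c_k z^k$ with $c_k=a_{m-k}$ for $k\neq m$ and $c_m=a_0-\lambda$, the nonzero roots of $P_m$ are exactly the $2m$ roots of $f_m(z)-\lambda$. Here I would make explicit the hypothesis that both tails genuinely decay, i.e.\ $a_{\pm m}\neq 0$ (say $\lvert a_m\rvert\asymp m^{-p}$ and $\lvert a_{-m}\rvert\asymp m^{-q}$): this guarantees $\deg P_m=2m$ and $c_0=a_m\neq 0$, so that $P_m(0)\neq 0$. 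The hypothesis cannot be dropped, since if one tail vanishes the cleared polynomial carries a factor $z^{\ell}$ and roots accumulate at the origin, so that $\lvert z_{0,m}\rvert\to 1$ would fail. I would then reduce to a one-sided claim: since $z\mapsto 1/z$ carries the roots of $f_m(z)-\lambda$ to those of the reversed symbol $\sum_k a_{-k}z^{-k}-\lambda$, which again has algebraic off-diagonal decay (with $p$ and $q$ interchanged), it suffices to prove $\limsup_{m\to\infty}\max_i\lvert z_i^{(m)}\rvert\leq 1$ for the largest root. Applying the same bound to the reversed symbol controls the smallest root from below, and the two bounds together squeeze $\lvert z_{0,m}\rvert\to 1$ for every choice of root.

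The heart of the matter is a coefficient-profile estimate for $P_m$. The extreme coefficients $\lvert c_0\rvert=\lvert a_m\rvert\asymp m^{-p}$ and $\lvert c_{2m}\rvert=\lvert a_{-m}\rvert\asymp m^{-q}$ decay only polynomially, while every coefficient is uniformly bounded, $\lvert c_k\rvert\leq C$. Hence $\log\lvert c_k\rvert$ ranges over an interval of length only $\mathcal{O}(\log m)$ as $k$ runs over $\{0,\dots,2m\}$. Plotting the points $(k,\log\lvert c_k\rvert)$, the slopes of the upper convex hull determine the logarithms of the root moduli; because the total vertical variation is $\mathcal{O}(\log m)$ while the horizontal extent is $\Theta(m)$, every hull slope is $\mathcal{O}(\log m/m)$. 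Consequently each root obeys $\bigl\lvert\log\lvert z_i^{(m)}\rvert\bigr\rvert=\mathcal{O}(\log m/m)$, so that $\lvert z_i^{(m)}\rvert\to 1$ uniformly in $i$, the largest root in particular sitting at $\lvert z\rvert\asymp m^{q/m}\to 1$. This makes precise the heuristic behind \eqref{eq: estimate band aprx}: as $m\to\infty$ the roots migrate onto $\mathbb{T}$, where the truncation error $\lvert f(z_0)-\lambda\rvert$ in \eqref{eq: estimate band aprx} tends to zero.

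The step I expect to be the main obstacle is turning this Newton-polygon heuristic into a bound on every individual root. The difficulty is precisely that the leading coefficient $\lvert a_{-m}\rvert$ is small: the classical Cauchy, Fujiwara and Rouché bounds all normalise by the leading coefficient, and because the near-extreme coefficients are mutually comparable (their ratios tend to $1$), their geometric-series tails contribute a spurious constant factor. A direct computation shows these elementary bounds only yield $\lvert z_i^{(m)}\rvert\leq 2+o(1)$, which is not enough to conclude. Ruling out stray roots lingering at a fixed distance greater than $1$ from $\mathbb{T}$ therefore requires the sharper convex-hull control, or equivalently the root-clustering analysis for truncated Toeplitz matrices in \cite{Schmidt_Spitzer_1960, LargeTruncatedToeplitz}. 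I would complete the proof either by invoking that clustering result directly or by establishing the single slope estimate above for the rightmost hull edge, which is the only quantitative input genuinely needed.
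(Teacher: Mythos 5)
Your first paragraph contains genuinely valuable observations: clearing the pole, the remark that the statement fails outright if a tail vanishes (so the paper's pure $\mathcal{O}$-hypothesis \eqref{eq: algebraic off-diagonal decay coefficients} cannot suffice as literally stated), and the $z\mapsto 1/z$ reduction are all correct. The gap is the step you yourself flag, and it is worse than an obstacle: it cannot be closed under the hypotheses you allow. The inference ``all upper-hull slopes of $(k,\log|c_k|)$ are $\mathcal{O}(\log m/m)$, hence every root satisfies $\bigl|\log|z_i^{(m)}|\bigr|=\mathcal{O}(\log m/m)$'' is false over $\C$: Newton-polygon data determines root moduli only up to multiplicative constants, and that slack is genuinely attained here. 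Concretely, take $a_j=j^{-p}$ for $j\geq 1$, $a_{-j}=\epsilon_j j^{-q}$ with $\epsilon_j=-1$ for $j\in\{2^{10},2^{11},2^{12},\dots\}$ and $\epsilon_j=+1$ otherwise, and $\lambda=0$. This sequence satisfies your two-sided hypothesis $|a_m|\asymp m^{-p}$, $|a_{-m}|\asymp m^{-q}$, and the points $(k,\log|c_k|)$ --- hence the convex hull --- are identical to those of the all-positive sequence. Yet for $m=2^i$ and real $z\in[3/2,3]$ one computes $f_m(z)=m^{-q}z^m\bigl(\tfrac{z}{z-1}-2+o(1)\bigr)+\mathcal{O}(1)$, which is positive at $z=3/2$ and negative at $z=3$ for large $m$; so $f_m-\lambda$ has a real root in $(3/2,3)$ along this subsequence and $|z_{0,m}|\not\to 1$. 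Hence no ``sharper convex-hull control'' exists, and the rightmost-slope estimate you single out, while true, is not the missing input: root location depends on the signs (phases) of the coefficients, which coefficient moduli cannot see. Your fallback also fails for a structural reason: Schmidt--Spitzer (Theorem \ref{thm: schmidt spitzer magnitude}) describes the limit $n\to\infty$ of the truncation size at \emph{fixed} bandwidth $m$, and says nothing about how the roots of $f_m-\lambda$ move in the bandwidth limit $m\to\infty$, which is what this corollary concerns.

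What is missing --- and what the paper's proof uses implicitly when it writes the coefficients as $C_1 j^{-p}$ and $C_2 j^{-q}$ --- is sign/phase regularity, e.g.\ $a_j\sim C_1 j^{-p}$ and $a_{-j}\sim C_2 j^{-q}$ with $C_1C_2\neq 0$. Under this assumption no polygon is needed: if $|z_{0,m}|\geq 1+\epsilon$ along a subsequence, then $\sum_{j=1}^m a_j z_{0,m}^{-j}$ stays bounded, while regularity forbids cancellation in the growing tail, giving $\bigl|\sum_{j=1}^m a_{-j}z_{0,m}^j\bigr|\geq c\,m^{-q}(1+\epsilon)^m\to\infty$; this contradicts $f_m(z_{0,m})=\lambda$. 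The case $|z_{0,m}|\leq 1-\epsilon$ then follows from your $1/z$ reduction. This divergence-of-the-dominant-tail argument is exactly the content of the paper's root-test conditions \eqref{eq:betalim}, which force $\bigl|\log|z_{0,m}|\bigr|=\mathcal{O}(\log m/m)$. The repair, therefore, is to state the regularity hypothesis explicitly and run this short contradiction; the hull computation can be kept only as heuristic motivation.
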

\begin{proof}
    We seek to find $z_{0,m}$ such that $f_m(z_{0,m}) = \lambda < \infty$ and let us define $z_{0,m} := e^{\i(\alpha + \i\beta(m))}$, then,
    \begin{align}
        f(z_0) &= \lim_{m\to\infty}f_m(z_{0,m})\\
        &\geq \lim_{m\to\infty} \sum_{j = 1}^m \Bigl(C_1 \frac{1}{j^p}e^{j \i \alpha}e^{j \beta(m)} + C_2 \frac{1}{j^q}e^{-j \i\alpha}e^{-j\beta(m)} \Bigr)  \\
        &= \lim_{m\to\infty} \Bigl(C_1 \sum_{j = 1}^m  e^{\i j\alpha}e^{-\log(j)p + j \beta(m)} + C_2 \sum_{j=1}^m e^{-\i j\alpha}e^{-\log(j)q - j \beta(m)}\Bigr).
    \end{align}
    By the root test, the above sum is convergent if and only if 
    \begin{equation}\label{eq:betalim}
        \begin{cases}
           \limsup\limits_{m\to\infty} \bigl( -\log(m)p + m\beta(m)\bigr) < 0, \\
            \limsup\limits_{m\to\infty} \bigl(-\log(m)q - m\beta(m) \bigr)< 0.
        \end{cases} 
    \end{equation}
    This means that $\lim\limits_{m\to \infty}\beta(m) = 0$ which proves the result.
\end{proof}

In terms of the complex band structure, as more bands are included, the localisation strength of the eigenvectors decreases. This behaviour is  established by the following theorem.

\begin{theorem}\label{thm: eigenvector banded Toeplitz operator}
    Let $\lambda \in \C$, then there exists an eigenvector $\vect{u}$ of the $m$-banded Toeplitz operator $\mathbf{T}(f_m)$ such that 
    \begin{equation}\label{eq: banded eigenvector decay estimate}
        \frac{|\vect{u}^{(i+j)}|}{|\vect{u}^{(i)}|} = \mathcal{O}(e^{-\beta(m)j}),
    \end{equation}
    where the exponential decay rate is asymptotically given by
    \begin{equation}\label{eq: bound on decay rate}
        \beta(m) = \mathcal{O}\left(\frac{\log(m)p}{m}\right),
    \end{equation}
    where $p$ is the algebraic off-diagonal decay rate of $\mathbf{T}(f_m)$ introduced in \eqref{eq: algebraic off-diagonal decay coefficients}.
\end{theorem}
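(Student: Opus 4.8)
The plan is to combine the eigenvector construction of Theorem~\ref{thm: eigvec m banded construction} with the root-magnitude analysis already carried out in the proof of Corollary~\ref{thm: decay rate with number of bands}. By Theorem~\ref{thm: eigvec m banded construction}, for any $\lambda \in \C$ there is a (generalised) eigenvector $\vect{u}$ of $\mathbf{T}(f_m)$ whose entries satisfy $|\vect{u}^{(i+j)}|/|\vect{u}^{(i)}| = \mathcal{O}(|z_{m+1}|^j)$, where $z_{m+1}$ is the $(m+1)$-th root of $f_m(z) - \lambda = 0$ in ascending magnitude. Writing $z_{m+1} = e^{\i(\alpha + \i\beta(m))}$, so that $|z_{m+1}| = e^{-\beta(m)}$, immediately recasts this decay as $\mathcal{O}(e^{-\beta(m)j})$, which establishes \eqref{eq: banded eigenvector decay estimate}. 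It therefore remains only to pin down the asymptotic size of $\beta(m) = -\log|z_{m+1}|$.

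For the rate, I would evaluate the defining relation $f_m(z_{m+1}) = \lambda$ exactly along the lines of the computation in the proof of Corollary~\ref{thm: decay rate with number of bands}. Substituting $z_{m+1} = e^{\i\alpha}e^{-\beta(m)}$ and invoking the algebraic decay $a_k = \mathcal{O}(|k|^{-p})$ for $k>0$ and $a_{-k} = \mathcal{O}(|k|^{-q})$ for $k>0$ splits $f_m$ into a positive-index contribution controlled by $\sum_{k=1}^m k^{-p}e^{k\beta(m)}$ and a negative-index contribution controlled by $\sum_{k=1}^m k^{-q}e^{-k\beta(m)}$. Since $\lambda$ stays finite as $m \to \infty$, neither partial sum may blow up; by the root-test condition \eqref{eq:betalim} this forces both $-\log(m)p + m\beta(m) < 0$ and $-\log(m)q - m\beta(m) < 0$, which sandwich $\beta(m)$ between $-\tfrac{\log(m)q}{m}$ and $\tfrac{\log(m)p}{m}$. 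Because we have assumed $p \leq q$, the upper constraint coming from the slower-decaying $p$-side is the binding one for a localising ($\beta(m)>0$) root, and it yields $\beta(m) = \mathcal{O}\bigl(\log(m)p/m\bigr)$, which is \eqref{eq: bound on decay rate}.

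The hard part will be upgrading the purely qualitative conclusion of Corollary~\ref{thm: decay rate with number of bands} (namely $\beta(m) \to 0$) into the quantitative rate $\log(m)p/m$. This requires reading \eqref{eq:betalim} not merely as a statement about the limit but as a bound valid at each finite $m$: the $m$-th term $e^{-\log(m)p + m\beta(m)}$ of the positive-index sum must remain bounded, and it is precisely this that caps $m\beta(m)$ at order $\log(m)p$. A secondary point to verify is that it is genuinely the root $z_{m+1}$ governing the eigenvector decay whose exponent obeys the $p$-side bound; since that root satisfies $|z_{m+1}| < 1$, i.e. $\beta(m) > 0$ — guaranteed in the positive-winding regime by Theorem~\ref{lem: magnitude of roots} — the constraint $\beta(m) < \tfrac{\log(m)p}{m}$ applies to it directly and gives the claimed estimate.
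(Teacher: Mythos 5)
Your proposal is correct and takes essentially the same route as the paper: both deduce the decay ratio \eqref{eq: banded eigenvector decay estimate} from the eigenvector construction of Theorem~\ref{thm: eigvec m banded construction} (writing $|z_{m+1}| = e^{-\beta(m)}$) and then extract the rate \eqref{eq: bound on decay rate} from the root-test condition \eqref{eq:betalim} established in the proof of Corollary~\ref{thm: decay rate with number of bands}. Your extra care in reading \eqref{eq:betalim} as a bound valid at each finite $m$, and in noting that the $p$-side constraint is the binding one for a localising root with $\beta(m)>0$, merely makes explicit what the paper's terser proof leaves implicit.
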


\begin{proof}
    The eigenvector estimate in \eqref{eq: banded eigenvector decay estimate} is a direct consequence of the eigenvector construction presented in the proof of Theorem \ref{thm: eigvec m banded construction}. By the estimate in \eqref{eq: asymptotic eigenvector construction} it holds that,
     \begin{equation}
        \frac{|\vect{u}^{(i+j)}|}{|\vect{u}^{(i)}|} = \mathcal{O}\bigl(|z_{m+1}|^j \bigr).
    \end{equation}
    By \eqref{eq:betalim} in the proof of Corollary \ref{thm: decay rate with number of bands}, it follows that $\beta(m)$ satisfies the decay rate
    \begin{equation}\label{eq: asymptotic beta}
        \beta(m) = \mathcal{O}\left(\frac{\log(m)p}{m}\right),
    \end{equation}
    which completes the proof.
\end{proof}

Theorem \ref{thm: eigenvector banded Toeplitz operator} asserts that the eigenvectors of $m$-banded Toeplitz operators are always exponentially localised. However, the exponential decay rate decreases towards zero as the bandwidth increases.
The convergence rate of $\beta(m)$ presented in \eqref{eq: bound on decay rate} is numerically verified in Figure \ref{Fig: convergence Floquet algebraic}.

\begin{figure}[h]
    \centering
    \subfloat[][Algebraic off-diagonal decay rate $p = 1.1$.]%
    {\includegraphics[width=0.48\linewidth]{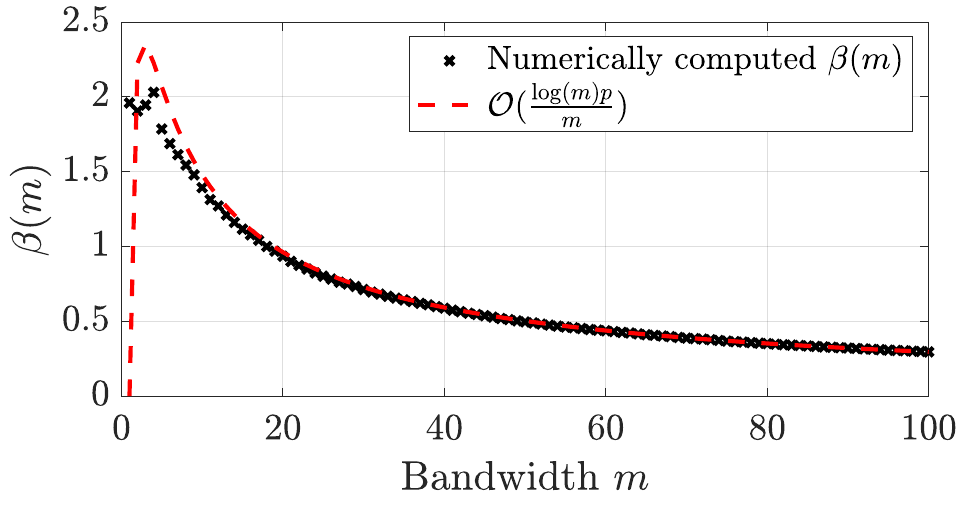}}\quad
    \subfloat[][Algebraic off-diagonal decay rate $p = 1.6$.]%
    {\includegraphics[width=0.48\linewidth]{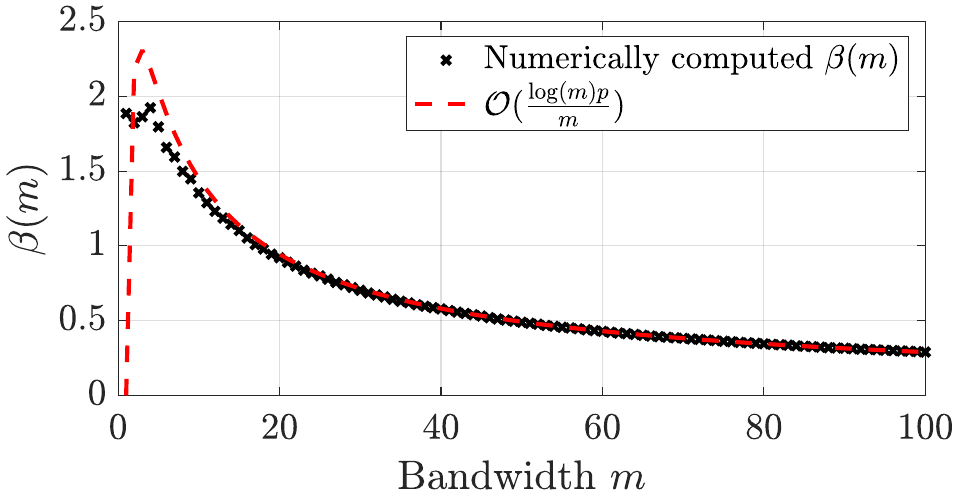}}
    \caption{The asymptotics of $\beta(m)$ are correctly predicted by \eqref{eq: asymptotic beta} and  $\beta(m) \to 0$ as $m\to \infty$.}
    \label{Fig: convergence Floquet algebraic}
\end{figure}

\begin{remark}\label{rem: exponential off diagonal decay}
    We would like to mention that the case where the entries of the Toeplitz matrix decay exponentially away from the diagonal, the truncation error introduced in  \eqref{eq: truncated symbol function} will be exponentially small. Therefore a banded approximation of the full Toeplitz operator yields a very accurate prediction of the spectral properties of the full operator.
\end{remark}

\subsection{Connection between the complex band structure and the inverse of a Toeplitz operator.}\label{sec: derivation of the discrete Green's function}
In order to excite a resonant frequency outside of the spectrum, i.e. $\lambda \not\in \sigma\bigl(\mathbf{T}(f)\bigr)$, the underlying system needs to experience an outside forcing, which is given by, possibly a linear combination of, $\delta_i$. In a physical setting this external forcing is often given my the form of a defect. The defected resonance problem becomes of the form,
\begin{equation}
    \mathbf{T}(f)\vect{u}-\lambda \vect{u} = \delta_i,
\end{equation}
which yields that the defect eigenmode is given by (potentially through a linear combination of),
\begin{equation}\label{eq: defect eigenmode resolvent}
    \vect{u} = \bigl (\mathbf{T}(f) - \lambda \Id \bigr)^{-1}\delta_i.
\end{equation}
Since the complex band structure yields a decay estimate on the entries of $\vect{u}$, we may achieve a bound on any column of the operator $\bigl(\vect{T}(f) - \lambda \Id\bigr)^{-1}$. For this reason, we may describe defect modes either by achieving an off-diagonal decay estimate of the operator $\bigl(\vect{T}(f) - \lambda \Id\bigr)^{-1}$ or by employing the complex band structure directly to the defect eigenmode $\vect{u}$. In the first case $\vect{u}$ is also commonly referred to as the discrete Green's function.

At this point, it is worth mentioning that the off-diagonal entries of the inverse of a banded matrix are expected to decay exponentially. For general banded matrices, not necessarily Toeplitz, the following bound may be achieved \cite[Proposition 2.2]{DemkoOffDiagonalDecay}.

\begin{theorem}[Demko et al.]\label{thm: demko off diagonal}
    Let $\mathbf{A}$ be $m$-banded, bounded, and invertible on $\ell^2$. Let $\kappa$ be the condition number of $\mathbf{A}$ and let $q = (\sqrt{\kappa}-1) / (\sqrt{\kappa} + 1)$, then
    \begin{equation}\label{eq: demko estimate}
        \lvert \mathbf{A}^{-1}(i,j) \rvert \leq C e^{\frac{2}{m}\log(q)|i-j|}.
    \end{equation}
\end{theorem}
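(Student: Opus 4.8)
The plan is to reduce the Toeplitz decay estimate to a classical polynomial-approximation argument on the spectrum of $\mathbf{A}$, following the strategy of Demko, Moss, and Smith. The key observation is that because $\mathbf{A}$ is $m$-banded, the matrix power $\mathbf{A}^k$ has bandwidth at most $mk$, so any degree-$k$ polynomial $P_k(\mathbf{A})$ vanishes in every entry $(i,j)$ with $|i-j| > mk$. The idea is to approximate $\mathbf{A}^{-1}$ entrywise by such polynomials and to turn a uniform approximation bound on the spectrum into an exponential off-diagonal bound.

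First I would normalise the problem so that the spectrum of $\mathbf{A}$ lies in a fixed real interval; since $\mathbf{A}$ need not be self-adjoint here, the clean version of this step really applies to $\mathbf{A}^*\mathbf{A}$ (or, when $\mathbf{A}$ is positive, to $\mathbf{A}$ itself), whose spectrum lies in $[c, C]$ with $C/c = \kappa$. Rescaling to the interval $[-1,1]$ and invoking the classical Chebyshev estimate for best uniform approximation of $1/x$ on $[c,C]$, one obtains a sequence of polynomials $P_k$ of degree $k$ with
\begin{equation}
    \sup_{x \in [c,C]} \bigl| x^{-1} - P_k(x) \bigr| \leq \frac{C'}{c}\, q^{k},
    \qquad q = \frac{\sqrt{\kappa}-1}{\sqrt{\kappa}+1}.
\end{equation}
By the spectral mapping / functional calculus this yields an operator-norm bound $\bigl\| \mathbf{A}^{-1} - P_k(\mathbf{A}) \bigr\| \leq C'' q^{k}$, and since individual entries are dominated by the operator norm, $\bigl| \mathbf{A}^{-1}(i,j) - P_k(\mathbf{A})(i,j) \bigr| \leq C'' q^{k}$ for every $i,j$.

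Next I would combine the bandwidth fact with this approximation. Fix $i \neq j$ and choose $k$ to be the largest integer with $mk < |i-j|$, i.e. roughly $k \approx |i-j|/m$. For this $k$ the entry $P_k(\mathbf{A})(i,j)$ vanishes, so
\begin{equation}
    \bigl| \mathbf{A}^{-1}(i,j) \bigr| = \bigl| \mathbf{A}^{-1}(i,j) - P_k(\mathbf{A})(i,j) \bigr| \leq C'' q^{k} \leq C\, q^{|i-j|/m} = C\, e^{\frac{1}{m}\log(q)\,|i-j|},
\end{equation}
which is precisely the claimed estimate \eqref{eq: demko estimate} (up to the constant and the factor in the exponent, which depends on whether one works with $\mathbf{A}$ or $\mathbf{A}^*\mathbf{A}$; the stated factor $2/m$ arises from passing through $\mathbf{A}^*\mathbf{A}$, whose bandwidth is $2m$ and whose condition number is $\kappa^2$, giving the square root back).

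The main obstacle is the non-self-adjointness: the Chebyshev approximation bound is cleanest for a positive operator with real spectrum, so for general invertible $\mathbf{A}$ the honest route is to apply the argument to the positive operator $\mathbf{A}^*\mathbf{A}$ and then write $\mathbf{A}^{-1} = (\mathbf{A}^*\mathbf{A})^{-1}\mathbf{A}^*$, tracking how the bandwidth doubles and how the condition number enters as $\kappa^2$. Keeping the constants and the exact factor in the exponent consistent through this reduction is the delicate bookkeeping; the structural content — bandwidth of powers plus Chebyshev decay — is otherwise routine, and since the result is quoted from \cite{DemkoOffDiagonalDecay} I would cite their Proposition 2.2 for the precise constants rather than re-deriving them.
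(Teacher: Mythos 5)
The paper itself gives no proof of Theorem \ref{thm: demko off diagonal}: it is quoted from \cite[Proposition 2.2]{DemkoOffDiagonalDecay}. Your Chebyshev-plus-bandwidth strategy is precisely the Demko--Moss--Smith argument, so structurally you have reconstructed the proof of the cited source, and for \emph{positive definite} $\mathbf{A}$ your argument is complete and correct.

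The genuine gap is your final bookkeeping claim that passing through $\mathbf{A}^*\mathbf{A}$ ``gives the square root back'' and produces the stated factor $2/m$. It cannot: both effects of that reduction make the bound \emph{weaker}, never stronger. Since $\kappa(\mathbf{A}^*\mathbf{A}) = \kappa^2$, the Chebyshev parameter for $\mathbf{A}^*\mathbf{A}$ is $(\kappa-1)/(\kappa+1)$, which is strictly larger than $q = (\sqrt{\kappa}-1)/(\sqrt{\kappa}+1)$; and the doubled bandwidth halves the exponent rather than doubling it. Writing $\mathbf{A}^{-1} = (\mathbf{A}^*\mathbf{A})^{-1}\mathbf{A}^*$ only shifts the constant, so your route honestly proves
\begin{equation}
    \lvert \mathbf{A}^{-1}(i,j) \rvert \leq C \left(\frac{\kappa-1}{\kappa+1}\right)^{\frac{|i-j|}{2m}},
\end{equation}
which is strictly weaker than \eqref{eq: demko estimate}. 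Moreover, this is not a fixable bookkeeping issue, because \eqref{eq: demko estimate} with the positive-definite constants is \emph{false} for general non-Hermitian banded operators. Consider the bidiagonal Toeplitz operator with symbol $f(z) = 1 - a z^{-1}$, $a = 0.9$: it is invertible with $\lvert\mathbf{T}(f)^{-1}(i,j)\rvert = a^{\,i-j}$ for $i \geq j$, and $\kappa = (1+a)/(1-a) = 19$, so $q \approx 0.63$; with the minimal bandwidth the right-hand side of \eqref{eq: demko estimate} decays like $0.63^{|i-j|}$, which is eventually far smaller than the true entries $0.9^{|i-j|}$. The resolution is that in \cite{DemkoOffDiagonalDecay} Proposition 2.2 assumes $\mathbf{A}$ positive definite (and the factor $2/m$ there reflects their bandwidth convention, not any $\mathbf{A}^*\mathbf{A}$ reduction), while the general invertible case is their Theorem 2.4, whose decay rate is governed by $(\kappa-1)/(\kappa+1)$ --- exactly what your reduction yields. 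So your proof establishes the correct general statement with the weaker constants, and the statement as quoted needs either the positive definiteness hypothesis or those weaker constants; as written it cannot be proved.
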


Although the bound established in Theorem \ref{thm: demko off diagonal} holds universally for any banded matrix, it is generally not sharp. The matrices which we will subsequently treat have the distinction that apart from being banded, they are Toeplitz and the off diagonals entries decay algebraically. We will therefore present a similar but sharper bound using the complex band structure. 

A notable attribute of Toeplitz operators exhibiting algebraic off-diagonal decay is that the decay rate is preserved under upon matrix inversion. This result was initially established in \cite[Proposition 3]{JAFFARD1990461}

\begin{theorem}[Jaffard]\label{thm: jaffard off diagonal}
    If $\mathbf{A}$ is an invertible operator with bounded inverse and suppose $\alpha > 1$ such that
    \begin{equation}
        \lvert \mathbf{A}(i,j)\rvert \leq C \bigl\lvert 1 + |i-j| \bigr\rvert^{-\alpha},
    \end{equation}
    then
    \begin{equation}
        \lvert \mathbf{A}^{-1}(i,j)\rvert \leq C \bigl\lvert 1 + |i-j| \bigr\rvert^{-\alpha}.
    \end{equation}
\end{theorem}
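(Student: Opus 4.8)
The plan is to recognise the claim as an instance of \emph{inverse-closedness} (a Wiener-type property): the matrices obeying the stated algebraic decay form a Banach algebra under matrix multiplication, and this algebra is \emph{spectral} inside $B(\ell^2)$, so that invertibility in $B(\ell^2)$ already forces the inverse back into the algebra with the \emph{same} decay exponent. I would fix the weighted norm
\[
    \|\mathbf{A}\|_\alpha := \sup_{i,j}\,\bigl|\mathbf{A}(i,j)\bigr|\,\bigl(1+|i-j|\bigr)^{\alpha},
\]
so the hypothesis reads $\|\mathbf{A}\|_\alpha<\infty$, and set $\mathcal{J}_\alpha := \{\mathbf{B}:\|\mathbf{B}\|_\alpha<\infty\}$. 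The first routine step is to check that $\mathcal{J}_\alpha$ is a Banach space continuously embedded in $B(\ell^2)$; the embedding follows from a Schur test, whose row and column sums converge precisely because $\alpha>1$.

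The key analytic ingredient is the \emph{convolution inequality for polynomial weights}: for $\alpha>1$,
\[
    \sum_{k}\bigl(1+|i-k|\bigr)^{-\alpha}\bigl(1+|k-j|\bigr)^{-\alpha}\leq C_\alpha\,\bigl(1+|i-j|\bigr)^{-\alpha},
\]
where $k$ runs over the index set. I would prove this by splitting the sum according to whether $|i-k|\geq|i-j|/2$ or $|k-j|\geq|i-j|/2$ (the triangle inequality forces one to hold): on each piece one factor is dominated by $2^{\alpha}(1+|i-j|)^{-\alpha}$, while the remaining factor is summed freely, the free sum $\sum_{l}(1+|l|)^{-\alpha}$ converging exactly because $\alpha>1$. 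This yields submultiplicativity $\|\mathbf{A}\mathbf{B}\|_\alpha\leq C_\alpha\|\mathbf{A}\|_\alpha\|\mathbf{B}\|_\alpha$, so after rescaling the norm $\mathcal{J}_\alpha$ is a Banach algebra with unit $\Id$.

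The crux, and the step I expect to be the main obstacle, is upgrading \enquote{$\mathbf{A}^{-1}\in B(\ell^2)$} to \enquote{$\mathbf{A}^{-1}\in\mathcal{J}_\alpha$}. The naive Neumann series $\sum_n(\Id-\mathbf{A})^n$ only converges for $\mathbf{A}$ close to the identity, and the diagonal exponential-conjugation trick $D_s\mathbf{A}D_{-s}$ that underlies the exponentially-banded Demko estimate is useless here, since exponential weights destroy polynomial decay. I would instead invoke the abstract Baskakov--Brandenburg spectral-radius technique: it suffices to establish a \emph{relative} submultiplicativity inequality of the form
\[
    \|\mathbf{B}^{2}\|_\alpha\leq D\,\|\mathbf{B}\|^{\theta}\,\|\mathbf{B}\|_\alpha^{\,2-\theta}\qquad\text{for some }\theta\in(0,1),
\]
with $\|\cdot\|$ the $B(\ell^2)$ operator norm, whereupon a computation of the spectral radius of a self-adjoint element (applied to $\Id-c\,\mathbf{A}^{*}\mathbf{A}$ for suitable $c$) forces the resolvent, and hence $\mathbf{A}^{-1}$, to lie in $\mathcal{J}_\alpha$. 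Verifying this relative inequality, which trades a fractional power of the operator norm against the algebra norm, is the technical heart and is exactly where the role of $\alpha>1$ re-enters.

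As an alternative to the abstract machinery I would note Jaffard's original, more hands-on route, which bootstraps the decay directly: one controls the entries of $\mathbf{A}^{-1}$ through an iterative estimate on finite truncations (whose inverses inherit decay from the convolution inequality), tracking the constants so that they remain uniform in the truncation size. Either way, the two soft steps (Banach-space structure and the algebra property) are straightforward consequences of the convolution inequality, and all the genuine difficulty is concentrated in the spectral/inverse-closedness argument; the exponent $\alpha$ is preserved rather than degraded precisely because the polynomial weight reproduces itself under convolution, in sharp contrast to the $m$-dependent degradation of the exponential rate seen in the banded case treated earlier.
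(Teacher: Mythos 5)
The first thing to note is that the paper does not prove this statement at all: it is imported verbatim from Jaffard's 1990 paper (Proposition 3 there), so there is no internal proof to compare against, and your attempt can only be judged against the known proofs in the literature. On that benchmark, your plan is sound: it is essentially the modern Banach-algebra proof of Jaffard's lemma (the route later systematised by Gr\"ochenig and co-authors), rather than Jaffard's original entrywise bootstrap, which you correctly mention as the alternative. You rightly identify that $\mathcal{J}_\alpha$ is a symmetric unital Banach algebra continuously embedded in $B(\ell^2)$ (convolution inequality plus Schur test, both needing $\alpha>1$), that Neumann series and the exponential-conjugation trick behind Demko-type estimates are useless here, and that inverse-closedness should come from a Brandenburg-type differential-norm inequality combined with the spectral-radius argument applied to $\Id - c\mathbf{A}^*\mathbf{A}$. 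Incidentally, your norm formulation also repairs a small blemish in the paper's statement, which asserts that $\mathbf{A}^{-1}$ satisfies the bound with the \emph{same} constant $C$; the constant for the inverse must also depend on $\lVert\mathbf{A}^{-1}\rVert_{B(\ell^2)}$ and $\alpha$.

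The one substantive omission is that you assert, but do not verify, the inequality $\lVert\mathbf{B}^2\rVert_\alpha \leq D\,\lVert\mathbf{B}\rVert^\theta\lVert\mathbf{B}\rVert_\alpha^{2-\theta}$ on which everything rests. It is true, with $\theta = \tfrac{2\alpha-2}{2\alpha-1}\in(0,1)$, and the verification is short, so your plan does close. Writing $n=|i-j|$, split $\sum_k|\mathbf{B}(i,k)||\mathbf{B}(k,j)|$ over the sets $|i-k|\geq n/2$ and $|k-j|\geq n/2$; on the first set bound $|\mathbf{B}(i,k)|\leq 2^\alpha\lVert\mathbf{B}\rVert_\alpha(1+n)^{-\alpha}$ and control the remaining full column sum by cutting it at a radius $R$,
\begin{equation*}
\sum_k |\mathbf{B}(k,j)| \;\leq\; (2R+1)^{1/2}\,\lVert\mathbf{B}\rVert \;+\; C_\alpha R^{1-\alpha}\lVert\mathbf{B}\rVert_\alpha ,
\end{equation*}
where the near part uses Cauchy--Schwarz together with $\lVert\mathbf{B}e_j\rVert_2\leq\lVert\mathbf{B}\rVert$ and the far part uses $\alpha>1$. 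Optimising $R\sim(\lVert\mathbf{B}\rVert_\alpha/\lVert\mathbf{B}\rVert)^{2/(2\alpha-1)}$ gives $\sum_k|\mathbf{B}(k,j)|\leq C\lVert\mathbf{B}\rVert^{\theta}\lVert\mathbf{B}\rVert_\alpha^{1-\theta}$, and the second set is handled symmetrically with row sums. The decisive feature --- and the reason the exponent $\alpha$ is preserved, unlike the $m$-dependent degradation in the banded case --- is that the factor $(1+n)^{-\alpha}$ comes out with the full exponent, the interpolation being absorbed entirely into the norms. With this lemma, Brandenburg's argument yields $r_{\mathcal{J}_\alpha}(\mathbf{B}) = r_{B(\ell^2)}(\mathbf{B})$ for all $\mathbf{B}\in\mathcal{J}_\alpha$; applying it to the self-adjoint element $\Id - c\mathbf{A}^*\mathbf{A}$, whose operator norm is strictly less than $1$ for suitable $c>0$, makes the Neumann series for $(c\mathbf{A}^*\mathbf{A})^{-1}$ converge in $\lVert\cdot\rVert_\alpha$, and then $\mathbf{A}^{-1}=(\mathbf{A}^*\mathbf{A})^{-1}\mathbf{A}^*\in\mathcal{J}_\alpha$, exactly as you indicated.
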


From standard spectral theory, the resolvent $(\mathbf{T}(f)-\lambda\Id)^{-1}$ is well-defined if and only if $\lambda \not\in\sigma\bigl(\mathbf{T}(f)\bigr)$. By the characterisation of the spectrum in Theorem \ref{thm: Gohberg Spectrum for baned Toeplitz operator}, Jaffard's estimate for the resolvent always holds for defect eigenfrequencies outside of the closure of $\sigma_{\mathrm{wind}}$. This is the reason why Theorem \ref{thm: jaffard off diagonal} will be of particular interest for studying the localisation properties of defect eigenmodes.

For $m$-banded Toeplitz operators, it is clear that asymptotically the exponential bound in \eqref{eq: demko estimate} always wins, but there is the possibility that for entries close to the diagonal, the off-diagonal decay is algebraic and only later transitions to exponential. This transition is numerically illustrated in Figure \ref{Fig: Jaffard Demko CBS Estimates}.

\begin{figure}[h!]
    \centering
    \subfloat[][Non-Hermitian $8$-banded Toeplitz matrix with algebraic off-diagonal decay.]
    {\includegraphics[width=0.45\linewidth]{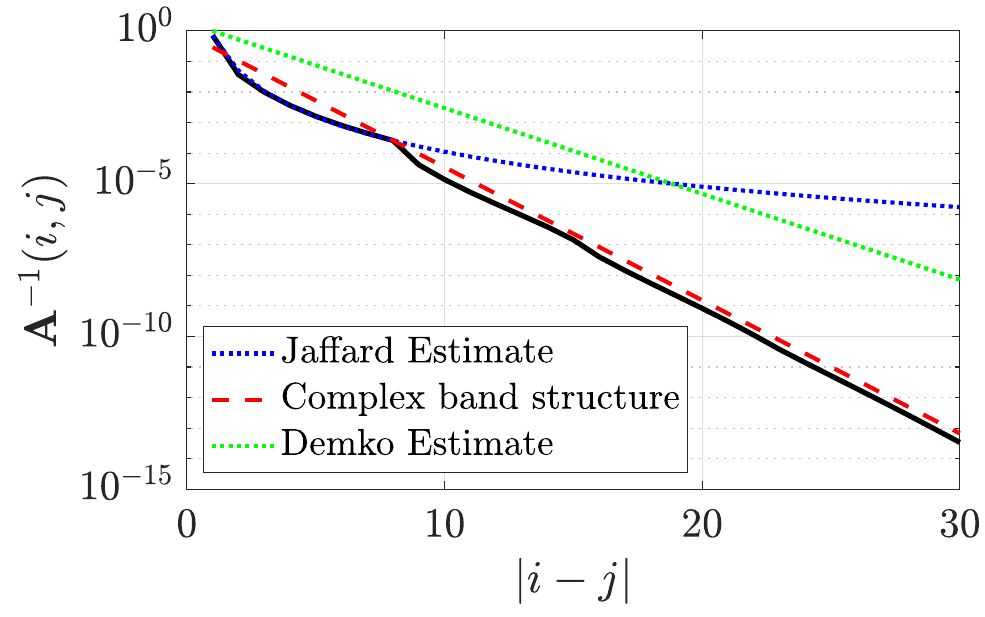}}\quad
    \subfloat[][Non-Hermitian $20$-banded Toeplitz matrix with algebraic off-diagonal decay.]%
    {\includegraphics[width=0.45\linewidth]{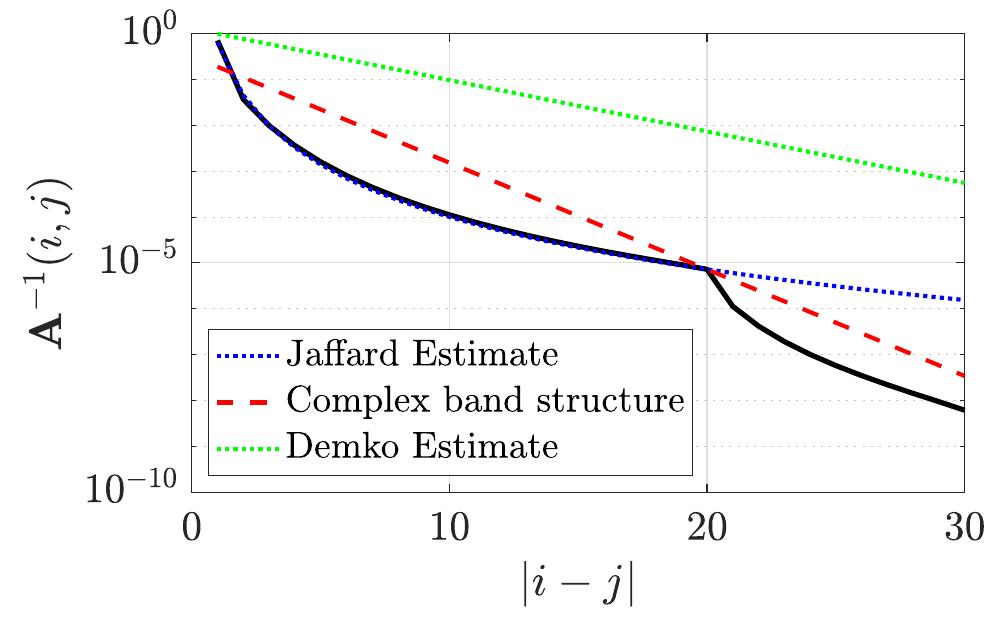}}
    \caption{The algebraic bound established by Jaffard correctly bounds the first entries of the eigenvector. As discussed previously, both Demko's exponential bound Theorem \ref{thm: demko off diagonal} and  the bound by the complex band structure are valid. However, the exponential bound achieved by the complex band structure is much tighter.}
    \label{Fig: Jaffard Demko CBS Estimates}
\end{figure}

Naturally, we achieve two bounds on the eigenmodes, which are competing. When one now considers the $m$-banded limit $m\to\infty$, by Theorem \ref{thm: eigenvector banded Toeplitz operator}, the asymptotic exponential decay rate converges to $0$, and therefore loses to the Jaffard decay estimate. Consequently, in the dense Toeplitz case, the eigenmodes are algebraically localised, which agrees with previous results \cite[Proposition 3]{JAFFARD1990461}.
In Figure \ref{fig:SpectralDecomposition} we recapitulate the main findings and illustrate the defect induced decay transition at different defect eigenfrequencies.

\begin{figure}[h]
    \centering
    \includegraphics[width=0.9\linewidth]{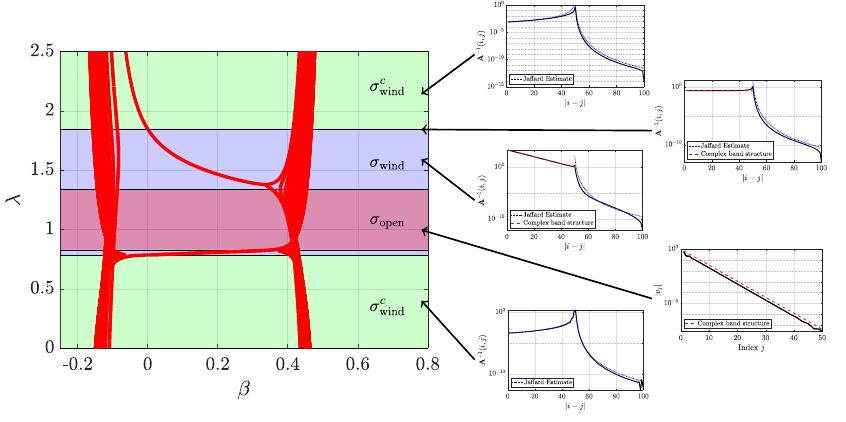}
    \caption{The red lines denote the complex band structure for a $50$-banded non-Hermitian Toeplitz matrix with algebraic off-diagonal decay. The red region denotes the open spectrum $\sigma_{\text{open}}$ defined in \eqref{def: open spectrum}. Eigenvectors for eigenvalues in the open spectrum are exponentially skin localised, and the exponential decay rate is predicted by the complex band structure. The green shaded region, $\sigma_{\mathrm{wind}}^\mathsf{c}$ denotes the eigenvalues for which Jaffard's theorem is applicable. The blue region, which fully contains the spectrum, denotes the winding region where Jaffard's estimate is not applicable. At the transition between $\sigma_{\mathrm{wind}}$ and $\sigma_{\mathrm{wind}}^\mathsf{c}$, the eigenmodes are constant leading up to the defect. Consequently the qualitative decay behaviour is accurately captured by the regions $\sigma_{\mathrm{wind}}$, $\sigma_{\mathrm{wind}}^\mathsf{c}$ and $\sigma_{\text{open}}$}
    \label{fig:SpectralDecomposition}
\end{figure}

\subsection{Pseudospectra for banded Toeplitz matrices.}\label{sec: pseudospectra}
This section is devoted to the study of finite banded and dense Toeplitz matrices.
As pointed out in \cite{REICHEL1992153}, the spectra of non-Hermitian systems are highly sensitive to small perturbations. The solution is offered by forming $\varepsilon$-pseudoeigenvectors through the truncation of localised Bloch modes. This leads us to introduce pseudospectra. For a detailed discussion, see \cite[Section 1-2]{trefethen.embree2005Spectra}.

\begin{definition}\label{def: pseudospectrum}
    Let \(\varepsilon > 0\). Then, \(\lambda \in \mathbb{C}\) is an \(\varepsilon\)-pseudoeigenvalue of \(\mathbf{A}\in \mathbb{C}^{N\times N}\) if one of the following conditions is satisfied:
    \begin{enumerate}[(i)]
        \item \(\lambda\) is a proper eigenvalue of \(\mathbf{A} + \mathbf{E}\) for some \(\mathbf{E} \in \mathbb{C}^{N\times N}\) such that \(\lVert \mathbf{E} \rVert \leq \varepsilon\);
        \item $\lVert (\mathbf{A}-\lambda \Id) \mathbf{u} \rVert < \varepsilon$ for some vector $u$ with $\lVert \vect u \rVert = 1$;
        \item $\lVert (\mathbf{A}- \lambda \Id)^{-1} \rVert^{-1} \leq \epsilon$.
    \end{enumerate}
    The set of all $\varepsilon$-pseudoeigenvalues of $\vect A$, the $\varepsilon$-pseudospectrum, is denoted by $\sigma_{\epsilon}(\vect A)$. If some non-zero $\vect u$ satisfies $\lVert (\lambda\Id - \mathbf{A}) \mathbf{u} \rVert < \varepsilon$, then we say that $\vect u$ is an $\varepsilon$-pseudoeigenvector of $\mathbf{A}$. By the norm equivalence in finite dimensions, any norm may be used.
\end{definition}

Let us denote by $\vect{T}_N(f_m) \in \R^{N \times N}$ the $m$-banded Toeplitz matrix which results from truncating the operator $\vect{T}(f_m)$ to the first $N$ entries as in \eqref{eq: truncate operator to matrix} and let $\lambda_N$ be the associated eigenvalue. In this manner, note that $\mathbf{T}_N(f_N)$ is a finite and dense $N\times N$ Toeplitz matrix.

\begin{proposition}\label{prop: Pseudoeigenvector construction}
    Let $\vect{T}_N(f) \in \R^{N \times N}$ be a dense non-Hermitian Toeplitz matrix with algebraic off-diagonal decay as in \eqref{eq: algebraic off-diagonal decay coefficients} and let $\vect{v}_N$ be a truncated eigenvector of the $N$-banded Toeplitz operator. Then 
    $\vect{v}_N$ is an $\varepsilon_N$-pseudoeigenvector, with
    \begin{equation}
        \varepsilon_N = \mathcal{O}\left(\frac{N^{1-2p}(N^p-1)}{N^{p/N}-1}\right).
    \end{equation}

\end{proposition}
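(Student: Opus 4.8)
The plan is to bound the residual $\lVert (\vect{T}_N(f) - \lambda_N \Id)\vect{v}_N \rVert$ directly, where $\vect{v}_N$ is the truncation of the $N$-banded eigenvector to its first $N$ entries, and then read off $\varepsilon_N$ from Definition \ref{def: pseudospectrum}(ii). The key observation is that $\vect{v}_N$ is built to satisfy $\vect{T}(f_N)\vect{v}_N = \lambda_N \vect{v}_N$ exactly (away from the boundary rows), so the residual for the \emph{dense} matrix $\vect{T}_N(f)$ comes from two distinct sources: first, the difference between the dense symbol $f$ and the banded symbol $f_N$, which by \eqref{eq: truncated symbol function} is the tail $\sum_{j=N+1}^\infty(a_j z^{-j}+a_{-j}z^j)$; and second, the boundary/truncation effects at the top and bottom of the finite matrix. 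Since $\vect{T}_N(f_N)$ is in fact dense for this bandwidth (because $m=N$ means all available off-diagonals are retained), the dominant error is genuinely the algebraic tail together with the edge terms.

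\textbf{Main steps.}
First I would write $\vect{T}_N(f) = \vect{T}_N(f_N) + \vect{R}_N$, where $\vect{R}_N$ collects the entries $a_{i-j}$ with $|i-j|\geq N$; inside an $N\times N$ block these are only the extreme corner entries, and each such entry is $\mathcal{O}(|i-j|^{-p})$ by \eqref{eq: algebraic off-diagonal decay coefficients}. Second I would use that $\vect{v}_N$ solves the banded problem on the interior rows, so that
\begin{equation}
    (\vect{T}_N(f) - \lambda_N\Id)\vect{v}_N = \vect{R}_N \vect{v}_N + \vect{b}_N,
\end{equation}
where $\vect{b}_N$ is the boundary-row defect of the banded construction. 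Third, I would estimate $\lVert \vect{R}_N\vect{v}_N\rVert$ by summing the algebraic tail against the entries of the (normalised) Bloch vector $\vect{v}_N$: since $|z_{N+1}|\to 1$ by Corollary \ref{thm: decay rate with number of bands}, the vector is essentially flat across the $N$ sites, so the relevant quantity is a double sum $\sum_{i}\sum_{j:\,|i-j|\geq N}|i-j|^{-p}$ over the $N\times N$ index range. The arithmetic of this sum is what produces the stated rate: the factor $N^{1-2p}$ arises from the algebraic tail bound $\sum_{k\geq N}k^{-p}=\mathcal{O}(N^{1-p})$ combined with a further $N^{-p}$-type factor from the Bloch amplitude normalisation, the $(N^p-1)$ from summing a geometric-type series in the entries, and the denominator $N^{p/N}-1$ from the normalisation $\lVert\vect{v}_N\rVert$ where $|z_{N+1}|^N\approx N^{-p/N}$ via $\beta(N)=\mathcal{O}(\log(N)p/N)$ from \eqref{eq: bound on decay rate}.

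\textbf{The main obstacle.}
The hard part will be tracking the normalisation constant precisely enough to land on the exact expression $N^{1-2p}(N^p-1)/(N^{p/N}-1)$ rather than just an order-of-magnitude bound. In particular, the denominator $N^{p/N}-1$ reflects the deviation of $|z_{N+1}|$ from $1$ on the scale of the system size, and getting the numerator's $N^{1-2p}$ and $(N^p-1)$ factors to separate correctly requires carefully pairing each tail contribution $|i-j|^{-p}$ with the corresponding Bloch amplitude ratio $|z_{N+1}|^{|i-j|}$ and resumming. I would handle this by fixing the normalisation $\lVert\vect{v}_N\rVert=1$ first, expressing the entries explicitly as powers of $z_{N+1}$ with $|z_{N+1}|=e^{-\beta(N)}$, and then bounding the residual double sum by a product of a tail sum (giving $N^{1-p}$) and a normalisation factor (giving the remaining $N^{-p}(N^p-1)/(N^{p/N}-1)$ structure), before collecting terms. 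The boundary defect $\vect{b}_N$ should be subdominant and absorbed into the same algebraic order, so the tail term $\vect{R}_N\vect{v}_N$ controls $\varepsilon_N$.
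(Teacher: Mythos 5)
Your overall framing --- truncate an eigenvector of the infinite $N$-banded operator, bound the residual $\lVert(\vect{T}_N(f)-\lambda_N\Id)\vect{v}_N\rVert$, and invoke Definition \ref{def: pseudospectrum}(ii) --- is the same as the paper's. But the decomposition you build the argument on collapses: in an $N\times N$ Toeplitz matrix the indices satisfy $|i-j|\le N-1$, so there are \emph{no} entries with $|i-j|\ge N$ (the extreme corner entries have $|i-j|=N-1$), hence your remainder $\vect{R}_N$ is identically zero and $\vect{T}_N(f)=\vect{T}_N(f_N)$ exactly. The double sum $\sum_{i}\sum_{j:\,|i-j|\ge N}|i-j|^{-p}$ that you claim produces the stated rate is therefore empty, and the term you designate as dominant contributes nothing. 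The error source you dismiss as subdominant, the boundary defect $\vect{b}_N$, is in fact the \emph{entire} residual, and bounding it is the whole content of the paper's proof: since the infinite eigenvector $\vect{v}$ satisfies $\vect{T}(f_N)\vect{v}=\lambda_N\vect{v}$ in every row, the residual of the truncated vector in row $j$ consists precisely of the lost terms $\sum_{i} a_{-(N-j+i)}\vect{v}^{N+i}$ involving the chopped-off entries $\vect{v}^{N+1},\vect{v}^{N+2},\dots$; these defects sit in the lower rows and are nonzero for every row except the first.

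The second gap is that, even for the term that actually matters, you never derive the bound --- your final paragraph reverse-engineers the formula rather than computing it, and it attributes the factors to the wrong mechanisms. The paper obtains the estimate by pairing the algebraic decay $|a_{-(N-j+i)}|=\mathcal{O}\bigl((N-j+i)^{-p}\bigr)$ of the coefficients with the exponential decay $|\vect{v}^{N+i}|=\mathcal{O}\bigl(e^{-\beta(N)(N+i)}\bigr)$, $\beta(N)=\mathcal{O}\bigl(\log(N)p/N\bigr)$, of the \emph{discarded} eigenvector entries: the factor $N^{-p}$ comes from $e^{-\beta(N)N}$, the factor $(N^p-1)/(N^{p/N}-1)$ is the geometric sum $\sum_{i}e^{-\beta(N)i}$ with ratio $e^{-\beta(N)}=N^{-p/N}$, and the row sum of $(N+1-j)^{-p}$ is absorbed into $\zeta(p)$. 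Note that this contradicts your heuristic that the vector is ``essentially flat'' across the sites: the proof needs the opposite, namely that the entries beyond index $N$ are uniformly $\mathcal{O}(N^{-p})$ small. Likewise, the denominator $N^{p/N}-1$ has nothing to do with the normalisation $\lVert\vect{v}_N\rVert=1$; it is the geometric-series denominator $1-e^{-\beta(N)}$ arising from summing the tail of the eigenvector. The correct proof is obtained by taking your $\vect{b}_N$ seriously and estimating it along these lines, not via the tail-of-the-symbol decomposition, which plays no role at this bandwidth.
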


\begin{proof}
    The truncated eigenvector $\vect{v}_N$ satisfies the eigenvalue problem $\vect{T}(f) \vect{v}_N = \lambda_N\vect{v}_N$ only in the first row. The error in the other rows is bounded as follows,
    \begin{equation}\label{eq: dense Toeplitz pseudospectrum}
        \left \lVert \left( \vect{T}_N(f) - \lambda_N\Id_N\right) \vect{v}_N \right\rVert_1 = \left \lVert \begin{pmatrix}
        0 \\
        -a_{-(N+1)}\vect{v}_N^{N+1}\\
        \vdots \\
        -\sum_{j = 1}^N a_j\vect{v}_N^{N+i}
        \end{pmatrix} \right\rVert_1 = \left\lVert \Bigl(
            \sum_{i=1}^j a_{-(N-j+i)}\vect{v}^{N+i}
        \Bigr)^\top_{j = \{1,\dots, N\}} \right\rVert_1 =\varepsilon_N
    \end{equation}
    A bound on $\varepsilon_N$ is achieved by the fact that the coefficients $a_i$ are bounded and by Theorem \ref{thm: eigenvector banded Toeplitz operator}, we know that $\vect{v}_N^i = \mathcal{O}(e^{-\beta(N) i})$. By Corollary \ref{thm: decay rate with number of bands} the decay rate depends on the number bands and is bounded by $\beta(N) \leq \frac{\log(N)p}{N}$. The residual norm in \eqref{eq: dense Toeplitz pseudospectrum} can therefore be estimated as follows:
    \begin{align}
    \varepsilon_N  &\leq \sum_{j = 1}^N \sum_{i = 1}^j e^{-\ln(N-j+i)p}e^{-\beta(N)(N+i)} \\
    & = e^{-\beta(N)N} \sum_{j=1}^N \sum_{i=1}^j e^{-\ln(N+i-j)p-\beta(N)i} \\
    &\leq N^{-p} \sum_{j=1}^N (N+1-j)^{-p} \sum_{i=1}^j e^{-\beta(N)i} \label{eq: crudest bound} \\
    & \leq N^{-p}  \sum_{j = 1}^N  (N+1-j)^{-p} N\frac{N^{-p}(N^p-1)}{N^{p/N}-1} \\
    &= N^{-p} N\frac{N^{-p}(N^p-1)}{N^{p/N}-1} (N+1) \sum_{i = 1}^N j^{-p} \\
    &\leq \zeta(p) \frac{N^{1-2p}(N^p-1)}{N^{p/N}-1},\label{eq: final bound}
\end{align}
where $\zeta(p)$ is the Riemann Zeta function and the bound on $\beta(N)$ was established in \eqref{eq: asymptotic beta}. 
\end{proof}
Note that the bound in \eqref{eq: crudest bound} is crude but holds for general $p>1$. Numerically, for $p \geq 5$ the bound becomes quite sharp.
For asymptotocially large $N$, the bound in \eqref{eq: final bound} decays algebraically.  As a consequence, by truncating the eigenvectors of a banded Toeplitz operator, one can construct algebraically good $\varepsilon_N$-pseudoeigenvectors for a Toeplitz matrix.

In \Cref{Fig: Pseudospectrum convergence}, we numerically verify the convergence of the $\varepsilon_N$-pseudospectrum established in Proposition \ref{prop: Pseudoeigenvector construction}. This result has important implications regarding spectral properties of large, finite and dense Toeplitz matrices. In particular, the eigenvectors of such matrices are qualitatively very different to the eigenvectors of corresponding infinite dense Toeplitz operators. Instead, a finite matrix of size $N$ should be compared to a \emph{banded} infinite Toeplitz matrix with $N$ bands.

\begin{figure}[ht]
    \centering
    \subfloat[][Computation performed for non-Hermitian Toeplitz matrix with off-diagonal decay rates $p = 2$ and $ q = 3.5$.]
    {\includegraphics[width=0.45\linewidth]{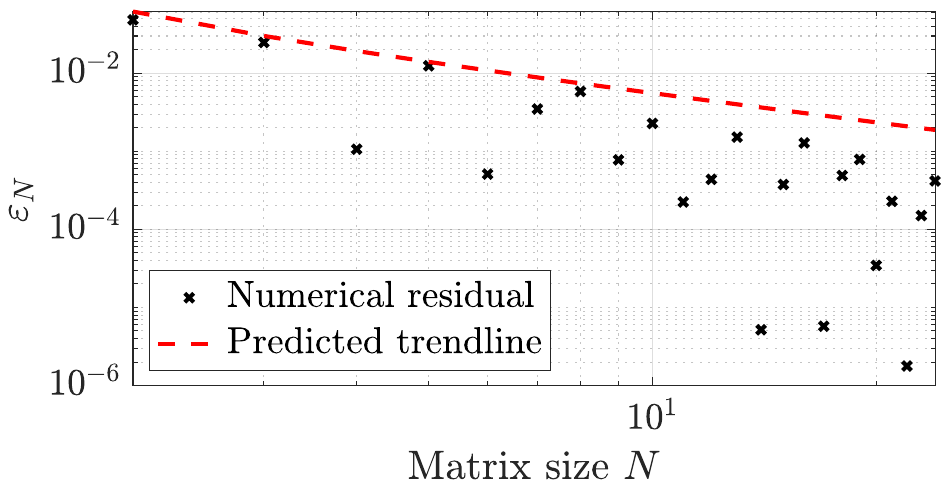}}\quad
    \subfloat[][Computation performed for non-Hermitian Toeplitz matrix with off-diagonal decay rate $p = q = 1.4$.]%
    {\includegraphics[width=0.45\linewidth]{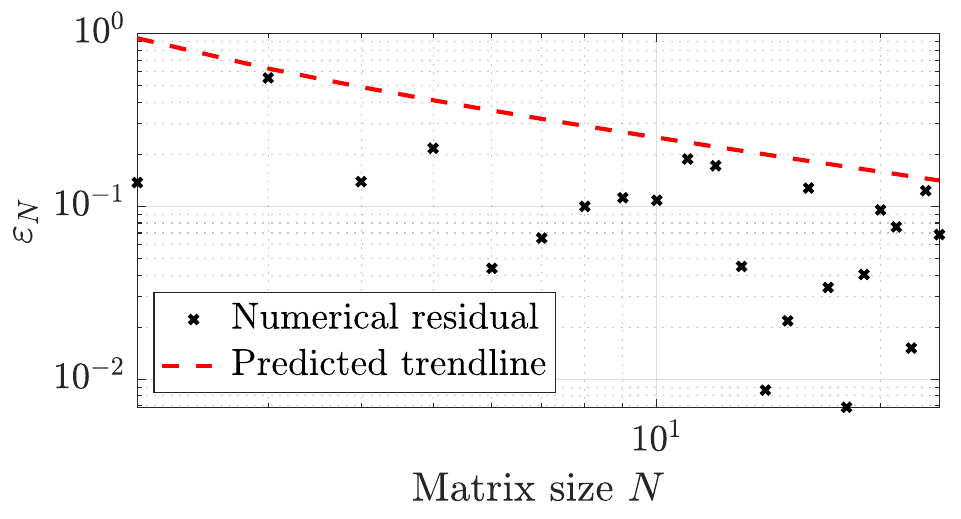}}
    \caption{ We numerically verify the bound established in Proposition \ref{prop: Pseudoeigenvector construction} for the convergence of the $\varepsilon_N$-pseudospectrum for dense non-Hermitian Toeplitz matrices. The pseudospectrum is algebraically convergent, which is to be expected as the decay rate of the eigenvalues decreases as $\mathcal{O}\bigl(\log(N)/N\bigr)$.}
    \label{Fig: Pseudospectrum convergence}
\end{figure}  

\subsection{Proper non-Hermitian Operators}\label{sec: non-Hermitian operator}
In this section, we illustrate that a variation of  the complex band structure may also be defined for $m$-banded Toeplitz matrices whose spectrum is no longer purely real. Crucially, the estimate from Theorem \ref{thm: eigenvector banded Toeplitz operator} still applies. We consider the roots of $f_m(z)-\lambda = 0$; we sort the roots in ascending order $|z_1| \leq \dots |z_m| \leq |z_{m+1}| \leq \dots \leq |z_{2m}|$ and plot $\beta$ where  $e^{-\beta} = |z_{m+1}|$. Clearly, for a frequency $\lambda \in \sigma_{\mathrm{wind}}^+$ it holds that $|z_{m+1}| < 1$, hence $\beta > 0$.
In Figure \ref{Fig: Complex band structure with complex frequency}, we numerically illustrate the complex band structure for complex-valued frequencies.

\begin{figure}[ht]
    \centering
    \subfloat[][$4$-banded Toeplitz matrix with algebraic off-diagonal decay.]
    {\includegraphics[width=0.45\linewidth]{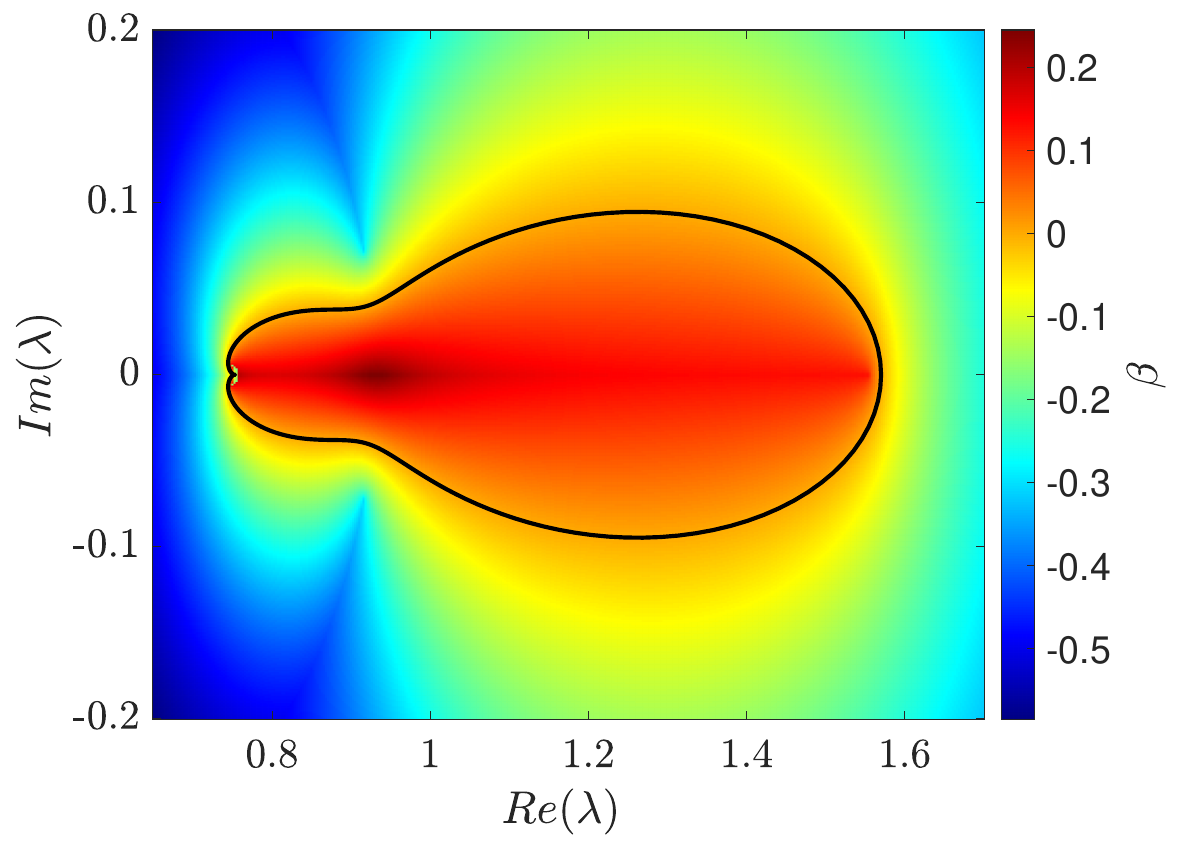}}\quad
    \subfloat[][$4$-banded Toeplitz matrix with exponential off-diagonal decay.]%
    {\includegraphics[width=0.45\linewidth]{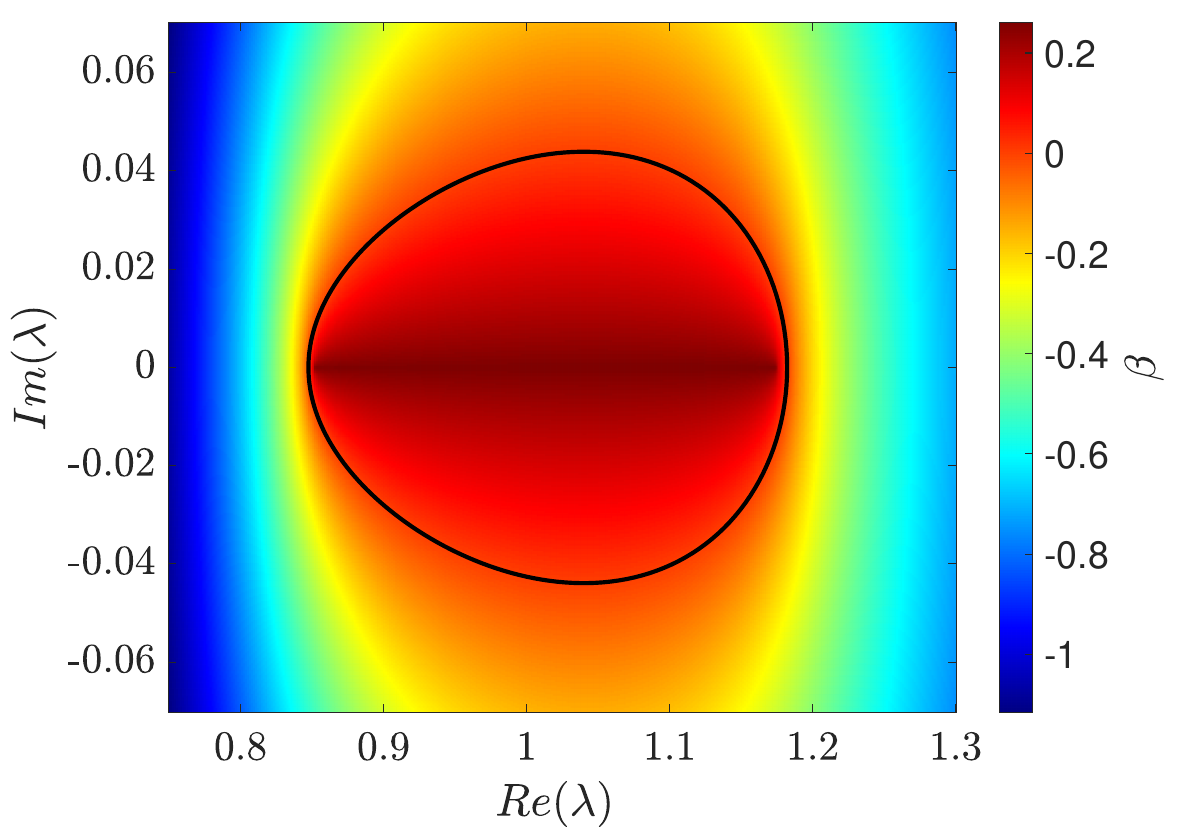}}
    \caption{The complex band structure for $\lambda \in \C$. The solid black line is the curve traced out by the symbol function $f(\mathbb{T})$. For eigenvalues contained within this curve, the complex Floquet parameter $\beta$ is positive, leading to exponentially localised eigenmodes. The decay strength is the strongest for $\lambda\in \R$ within the winding region.}
    \label{Fig: Complex band structure with complex frequency}
\end{figure}

\subsection{Complex band structure for Hermitian Toeplitz matrices}
For a Hermitian matrix, the sequence that generates the Toeplitz matrix is symmetric, that is, $a_i = a_{-i}$, $\forall i \in \N$. The symbol function \eqref{eq: def Toeplitz Operator} reduces to
\begin{equation}
    f_m(z) = a_0 + \sum_{j = 1}^m a_j(z^{-j} + z^j),
\end{equation}
which for $z = e^{\i(\alpha + \i\beta)}$ is the trigonometric function,
\begin{equation}
    f_m(e^{\i(\alpha + \i \beta)}) = a_0 + 2 \sum_{j = 1}^m a_j\bigl(\cos(\alpha k)\cosh(\beta k) - 2\i\sin(\alpha k)\sinh(\beta k) \bigr).
\end{equation}
Firstly as Hermitian matrices have real spectrum, it is advisable to restrict the symbol function to have a real-valued image, that is $\Im\bigl(f_m(e^{\i(\alpha + \i\beta)})\bigr) = 0$. An example of admissible arguments that generate a real values symbol function is illustrated in Figure \ref{Fig: Admissible Quasiperiodicities}. 

In the case of Hermitian Toeplitz operators, the spectral statement from Theorem \ref{thm: Gohberg Spectrum for baned Toeplitz operator} my be improved by the following result \cite{SpectraHermitianToeplitz}.
\begin{theorem}\label{thm: spectrum Hermitian Toeplitz}
    Let $a_n \in \ell^2$, $f$ be defined as in \eqref{eq: def symbol function operator}, $a_i = a_{-i}$ and let $r \leq f_m(\mathbb{T}) \leq R$, where $\mathbb{T}$ denotes the one-dimensional torus, then
    \begin{enumerate}
        \item The spectrum $\sigma\bigl(\mathbf{T}(f_m)\bigr)$ is the closed interval $[r, R] \subseteq \R$.
        \item If $f_m(\mathbb{T})$ is not constant, that is, if $r < R$, then the point spectrum of $\mathbf{T}(f_m)$ is empty.
    \end{enumerate}
\end{theorem}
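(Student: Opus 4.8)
The plan is to treat the two claims separately, exploiting that the hypothesis $a_i = a_{-i}$ together with the real coefficients makes $\mathbf{T}(f_m)$ a real symmetric matrix, hence a bounded self-adjoint operator on $\ell^2$; in particular $\sigma(\mathbf{T}(f_m)) \subseteq \R$ and every spectral point is detected by the quadratic form. Throughout I would work with the standard unitary identification $\ell^2(\N) \cong H^2(\mathbb{T})$, under which $\mathbf{T}(f_m)$ becomes the compression $P_+ M_{f_m} P_+$ of multiplication by the now real-valued trigonometric polynomial $f_m$ to the Hardy space $H^2$, where $P_+$ is the orthogonal projection of $L^2(\mathbb{T})$ onto $H^2$.

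For the first claim I would feed the real-valuedness of $f_m$ into Gohberg's theorem (Theorem~\ref{thm: Gohberg Spectrum for baned Toeplitz operator}). Since $f_m(e^{\i\alpha}) = a_0 + 2\sum_{j=1}^m a_j\cos(j\alpha)$ is real and continuous, the image $f_m(\mathbb{T})$ is a compact connected subset of $\R$, namely the interval $[r,R]$. It then remains to verify that $\sigma_\mathrm{wind} = \emptyset$: for $\lambda \notin \R$ the curve $\alpha \mapsto f_m(e^{\i\alpha}) - \lambda$ lies on a horizontal line avoiding the origin, so its total change of argument is zero, while for real $\lambda \notin [r,R]$ the quantity $f_m(e^{\i\alpha}) - \lambda$ keeps a constant sign on the real axis; in either case $\operatorname{wind}(f_m(\mathbb{T}), \lambda) = 0$. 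Theorem~\ref{thm: Gohberg Spectrum for baned Toeplitz operator} then yields $\sigma(\mathbf{T}(f_m)) = f_m(\mathbb{T}) \cup \sigma_\mathrm{wind} = [r,R]$. (Alternatively, the inclusion $\subseteq$ follows from the quadratic-form bound $r\|\vect{u}\|^2 \le \langle \mathbf{T}(f_m)\vect{u},\vect{u}\rangle \le R\|\vect{u}\|^2$ and $\supseteq$ from Weyl sequences $\vect{u}_n$ built out of windowed plane waves $e^{\i\alpha_0 k}$ with $f_m(e^{\i\alpha_0}) = \lambda_0$, the endpoints being recovered by closedness of the spectrum.)

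The second claim is the genuine difficulty, and I would establish it by the Hartman--Wintner argument. Suppose $\mathbf{T}(f_m)\vect{u} = \lambda\vect{u}$ with $\vect{u} \in H^2\setminus\{0\}$; by self-adjointness $\lambda \in \R$. The eigenequation $P_+(f_m - \lambda)u = 0$ says that $(f_m - \lambda)u$ is orthogonal to $H^2$, hence its Fourier support lies in the strictly negative integers, so $(f_m-\lambda)u = \bar g$ for some $g \in H^2$ with $g(0)=0$. Taking complex conjugates and using that $f_m - \lambda$ is real gives $(f_m - \lambda)\bar u = g$. Multiplying the two relations, $ug = (f_m-\lambda)|u|^2 = \overline{ug}$, so $ug$ is real-valued; but $u,g \in H^2$ force $ug \in H^1$, and a real-valued function in $H^1$ is necessarily constant. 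Since $g(0) = 0$ that constant is $0$, whence $ug \equiv 0$. As a nonzero $H^2$ function vanishes only on a set of measure zero, $g \equiv 0$, so $(f_m - \lambda)u \equiv 0$ and therefore $f_m \equiv \lambda$ almost everywhere, contradicting $r < R$.

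The main obstacle is concentrated entirely in this second part and is of a Hardy-space nature: the three inputs that (i) the product of two $H^2$ functions lies in $H^1$, (ii) a real-valued $H^1$ function is constant, and (iii) a nonzero $H^2$ function is nonzero almost everywhere (the F. and M. Riesz / inner--outer factorisation ingredient). By contrast, the first part is essentially immediate once one recognises that the winding number of a curve confined to the real axis vanishes, which reduces everything to Theorem~\ref{thm: Gohberg Spectrum for baned Toeplitz operator}.
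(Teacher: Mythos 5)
Your proof is correct, but there is no internal proof to compare it against: the paper states this theorem as a quoted result from the literature (\cite{SpectraHermitianToeplitz}) --- it is the classical Hartman--Wintner theorem --- and gives no argument of its own. Your proposal is, in essence, the standard proof of that theorem. For part (1), observing that a real-valued symbol traces a curve with empty winding region reduces everything to Theorem~\ref{thm: Gohberg Spectrum for baned Toeplitz operator}, exactly as in the classical treatment of continuous symbols; your alternative route via the quadratic-form bounds and Weyl sequences is also sound, and is in fact the more robust one, since the theorem as stated allows an arbitrary $\ell^2$ coefficient sequence, for which the symbol need not be continuous and Gohberg's theorem in the form quoted does not directly apply (one then replaces $r,R$ by the essential bounds of $f$). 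For part (2), your Hartman--Wintner argument is complete: from $P_+\bigl((f_m-\lambda)u\bigr)=0$ one gets $(f_m-\lambda)u=\bar g$ with $g\in H^2$, $g(0)=0$; reality of $f_m-\lambda$ gives $(f_m-\lambda)\bar u = g$, so $ug=(f_m-\lambda)|u|^2$ is a real-valued $H^1$ function, hence constant, and the constant is $u(0)g(0)=0$; boundary uniqueness for nonzero $H^2$ functions then forces $g\equiv 0$, hence $f_m\equiv\lambda$ almost everywhere, contradicting $r<R$. The three Hardy-space ingredients you isolate ($H^2\cdot H^2\subseteq H^1$, real-valued $H^1$ functions are constant, and the almost-everywhere nonvanishing of nonzero $H^2$ boundary functions) are precisely the ones needed, and each is applied correctly.
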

In particular, under the assumptions of Theorem \ref{thm: spectrum Hermitian Toeplitz} and $f(\mathbb{T}) \neq \text{const.}$, the operator $\mathbf{T}(f)$ has only essential spectrum.
For Hermitian banded Toeplitz matrices it is therefore reasonable to define the band gap as the complement of the spectrum $\R \setminus \sigma\bigl(\mathbf{T}(f)\bigr)$. 

\begin{figure}[htb]
    \centering
    \subfloat[][$1$-banded (tridiagonal) Hermitian Toeplitz operator.]%
    {\includegraphics[width=0.45\linewidth]{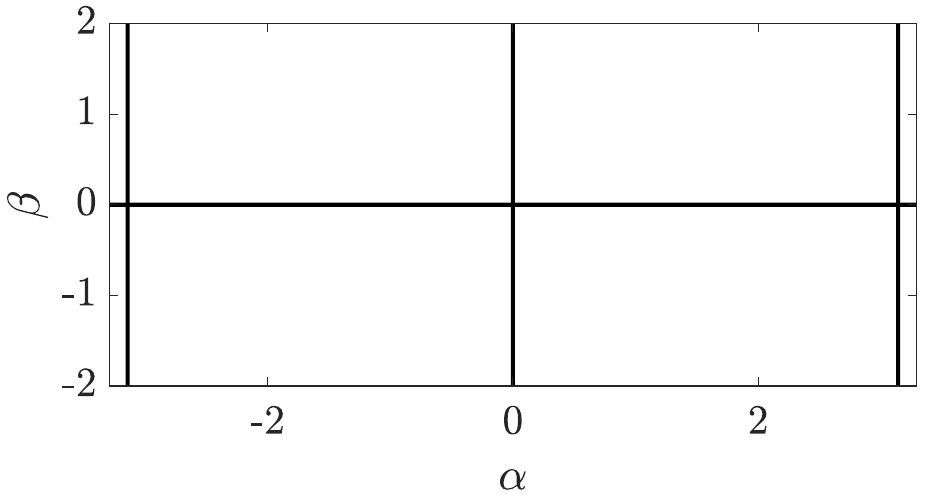}}\quad
    \subfloat[][$8$-banded Hermitian Toeplitz operator.]%
    {\includegraphics[width=0.45\linewidth]{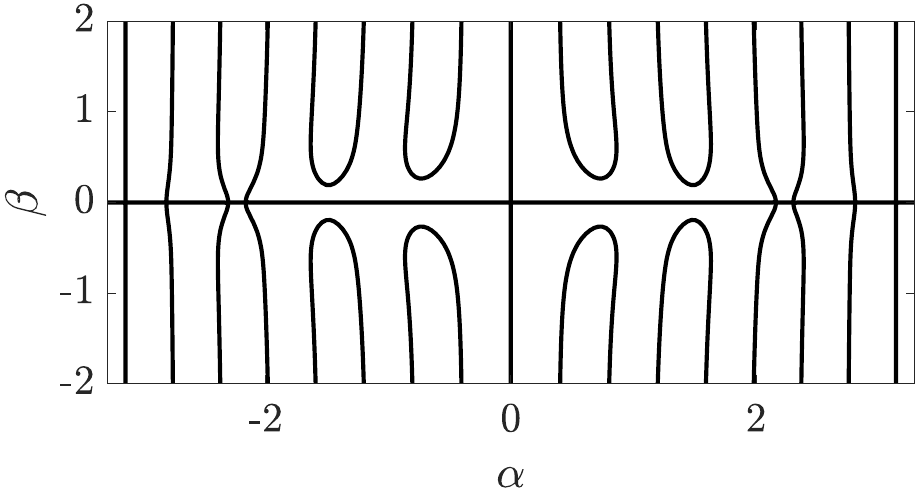}}
    \caption{Contour plot in $\alpha$ and $\beta$ for which $\Im\bigl(f_m(e^{\i(\alpha + \i\beta)})\bigr) = 0$. The figure was generated for banded Hermitian Toeplitz Matrices with an algebraic off-diagonal decay rate of $p = 1.8$.}
    \label{Fig: Admissible Quasiperiodicities}
\end{figure}
For tridiagonal Toeplitz matrices, that is $1$-banded matrices, it has been shown that the complex Floquet parameter $\beta$ is constant \cite[Theorem 2.3.]{debruijn2025complexbandstructurelocalisation}. 
The fact that there is no analogue of \cite[Theorem 2.3.]{debruijn2025complexbandstructurelocalisation} in the banded case shows that it is not possible to find closed formulas for the gap functions. For the remainder, we therefore have to revert to numerical calculations as illustrated in Figure \ref{Fig: Admissible Quasiperiodicities}.

\section{Non-Hermitian Skin effect with long range coupling}\label{sec: Skin effect}

In the present section, we study a three-dimensional system of
finitely many subwavelength resonators with a non-Hermitian coupling. We will be particularly interested in the localisation properties of the eigenmodes in the presence of defects in the structure.
We recall the results from \cite{ammari2023nonhermitianskineffectthreedimensional} which introduced a discrete approximation of the eigenmodes and eigenfrequencies of the system in terms of the eigenvectors and eigenvalues of the so-called \emph{gauge capacitance matrix} $\mathcal{C}^\gamma_N$, which is a dense non-Hermitian Toeplitz matrix with algebraic off-diagonal decay.
We will employ the insights gained on non-Hermitian Toeplitz operators from Sections \ref{sec: derivation of the discrete Green's function} and \ref{sec: pseudospectra} to construct algebraically good pseudoeigenvectors for the finite capacitance matrix and to explain the defect induced localisation transition.

\subsection{Finite resonator chain}

We recall the derivation of the gauge capacitance matrix associated with a three dimensional finite system of spherical resonators, developed in \cite{ammari2023nonhermitianskineffectthreedimensional}. We let $D\subset \R^3$ be the ball of radius $R\in (0,1/2)$. Subsequently, we examine a one-dimensional sequence described by $\mathcal{D} := \bigcup_{i=1}^N D_i$, where each $D_i$ is defined as the translated domain $D_i := D + (i,0,0)^\top$, with the superscript $\top$ signifying the transpose operation. A resonant frequency $\omega\in\C$ is characterised by $\Re(\omega)>0$, and the associated non-trivial eigenmode $u$ that satisfies the equation. 
\begin{equation}
    \label{helmholtz}
    \begin{cases}
    \ds \Delta u + k^2u = 0  & \text{in } \R^3 \setminus \overline{\mathcal{D}}, \\
    \ds \Delta u + k_i^2 u +\gamma \partial_1 u = 0  & \text{in } D_i, \quad i=1,\dots,N, \\
    \ds  u|_{+} -u|_{-}  = 0  & \text{on } \p \mathcal{D}, \\
    \ds \left.\delta \frac{\p u}{\p \nu} \right|_{+} - \left.\frac{\p u}{\p \nu} \right|_{-} = 0 & \text{on } \p \mathcal{D}, \\
    \ds u \text{ satisfies an outgoing radiation condition}.    
    \end{cases}
\end{equation}
where $k_i = \omega/v_i$ is known to be the wave number and $v_i$ the speed within the inclusions. 
In this context, $\nu$ represents the outward normal vector to the boundary $\partial D$. The notations $|_{+}$ and $|_{-}$ indicate the limit values approached from the exterior and interior of the domain $\mathcal{D}$, respectively. The parameter $\delta >0$, representing a non-dimensional contrast in materials, is considered to be small, ensuring the system operates within a high-contrast regime. 
It is important to note that parameters $k$ and $\omega$ will be treated interchangeably due to their asymptotic equivalence. Furthermore, the first-order directional derivative, characterised by a non-zero coefficient $\gamma$, signifies an imaginary gauge potential present within the inclusions. This term disrupts the time-reversal symmetry of the system, serving as the essential mechanism behind the condensation phenomena anticipated in this analysis.

We focus on the subwavelength regime, in which we seek the resonant frequencies and corresponding eigenmodes of the resonator system $\mathcal{D}$. This regime is characterised by the condition that $\omega \to 0$ as $\delta \to 0$.

\subsection{Asymptotic resonance expansion}

In a high contrast low frequency regime it was established in \cite{ammari2023nonhermitianskineffectthreedimensional}, that the capacitance matrix accurately captures the leading order asymptotics of the resonance problem.
We will briefly recall the main results and refer the reader to \cite{ammari2023nonhermitianskineffectthreedimensional} for a detailed analysis.
A fundamental solution to the operator with an imaginary gauge potential $\Delta+k^2 +\gamma \partial_{x_1}$ is given by 
\begin{equation}\label{green_skin}
    G_\gamma^k(x) = -\frac{ \mathrm{exp}({-{\gamma}x_1/{2}+\mathrm{i}\sqrt{k^2-\gamma^2/4}|x|})}{4\pi |x|}.      
\end{equation}
We define the single-layer potential associated to the fundamental solution $G_{\gamma}^k$ by
\begin{equation}\label{eq:S}
    {\tilde{\mathcal{S}}}_{D,\gamma}^k:\phi \in L^2(\partial D)\mapsto \int_{\partial D}G_\gamma^k(x-y)\phi(y)\mathrm{d}\sigma(y).
\end{equation}
The operator  ${\tilde{\mathcal{S}}}_{D,\gamma}^0 : L^2(\partial D) \rightarrow L^2(\partial D)$ is injective and hence has a left inverse. A solution to the scattering problem \eqref{helmholtz} may be represented as
\begin{equation}
\label{eq:ux_layerpotential}
    u(x)=
    \begin{cases}
        \tilde{\mathcal{S}}_{\mathcal{D},\gamma}^{k_i}[\psi], \ \ \ &x\in {D_i},\\
        \mathcal{S}_\mathcal{D}^k[\phi], \ \ \ & x\in \mathbb{R}^3\backslash \mathcal{D},
    \end{cases}
\end{equation}
for some unknown densities, $\psi, \phi \in L^2(\partial D)$. Here, $\mathcal{S}_\mathcal{D}^k$ denotes the Helmholtz single-layer potential, defined analogously as in \eqref{eq:S} but using the Helmholtz Green's function $G_0^k$.
We now introduce the gauge capacitance matrix,  which allows us to reduce the problem of finding the  subwavelength eigenfrequencies and eigenmodes to a finite-dimensional eigenvalue problem. 
\begin{definition}[Gauge capacitance matrix] 
The gauge capacitance matrix is defined entry-wise for $i,j \in \{1, \dots, N\}$ as
\begin{equation} \label{capacitancedef}
    \left(\mathcal{C}_N^\gamma\right)_{i,j} = -\frac{\delta v_i^2}{\int_{D_i}e^{\gamma x_1}\ \mathrm{d}x} \int_{\partial D_i} e^{\gamma x_1}(\mathcal{S}^0_{\mathcal{D}})^{-1}[\chi_{\partial D_j}](x) \, \mathrm{d}x \in \C^{N \times N},
\end{equation}
 where $\chi_{\partial D_i}$ denotes the indicator function of $\partial D_i$.
\end{definition}
The subwavelength resonances may be recovered using the Fundamental Theorem of Capacitance for a three dimensional non-Hermitian resonator chain \cite[Theorem 4.4]{ammari2023nonhermitianskineffectthreedimensional}. 
\begin{theorem} \label{thm:approx}
The $N$ subwavelength eigenfrequencies $\omega_i$ of (\ref{helmholtz}) satisfy, as $\delta \rightarrow 0$,
\begin{equation} \label{approxlambda}
\omega_n= \sqrt{\lambda_n} + O(\delta), 
\end{equation}
where $(\lambda_n)_{1\leq n\leq N}$ are the eigenvalues of $\mathcal{C}_N^\gamma$.
Let $v_n$ be the eigenvector of $\mathcal{C}_N^\gamma$ associated to $\lambda_n$. Then the normalised resonant mode $u_n$ associated to the resonant frequency $\omega_n$ is given by
\begin{equation} \label{eigenmodeapprox}
    u_n(x) = 
    \begin{cases}
        v_n \cdot \mathbf{S}^{k_n} (x) + O(\delta) \ \ &\text{in}\ \mathbb{R}^3\setminus \bar{\mathcal{D}}, \\
         v_n \cdot \tilde{\mathbf{S}}^{k_n}(x) + O(\delta) \ \ & \text{in}\ D_i,
    \end{cases}
\end{equation}
where 
\begin{equation}
\label{eigenvector:skin}
    \tilde{\mathbf{S}}^{k_n}(x) = \begin{pmatrix}
        \tilde{\mathcal{S}}_{\mathcal{D},\gamma}^{k_n} [\psi_1](x)\\ \ldots \\  \tilde{\mathcal{S}}_{\mathcal{D},\gamma}^{k_n} [\psi_N](x)
    \end{pmatrix},
\end{equation}
with $\psi_i = (\tilde{\mathcal{S}}_{\mathcal{D},\gamma}^0)^{-1}[\chi_{\partial D_i}]$ for $i=1,\ldots,N$.
\label{thmgauge}
\end{theorem}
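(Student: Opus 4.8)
The plan is to reduce the transmission problem \eqref{helmholtz} to a boundary integral system and then extract its leading-order behaviour as $\delta \to 0$ through a systematic asymptotic expansion. First I would insert the layer-potential ansatz \eqref{eq:ux_layerpotential} into the two transmission conditions on $\partial \mathcal{D}$. Continuity of $u$, together with the continuity of the single-layer potential across $\partial \mathcal{D}$, yields a relation between the interior density $\psi$ and the exterior density $\phi$; the conormal jump condition, upon applying the jump relations for the normal derivatives of $\tilde{\mathcal{S}}_{\mathcal{D},\gamma}^{k_i}$ and $\mathcal{S}_{\mathcal{D}}^{k}$ (each governed by its associated Neumann--Poincar\'e operator), produces a second coupled equation. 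Together these form a $2 \times 2$ block boundary integral system for $(\psi, \phi)$ whose solvability is equivalent to the existence of a subwavelength resonance.

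The key step is the asymptotic analysis in the subwavelength, high-contrast regime, where $\omega \to 0$ and $\delta \to 0$ simultaneously. I would expand each operator in powers of the wavenumber, using the known expansions $\mathcal{S}_{\mathcal{D}}^k = \mathcal{S}_{\mathcal{D}}^0 + k\,\mathcal{S}_{\mathcal{D}}^{(1)} + k^2\,\mathcal{S}_{\mathcal{D}}^{(2)} + \cdots$ and their gauged analogues, carrying the gauge factor $e^{-\gamma x_1/2}$ from \eqref{green_skin} explicitly. The decisive observation is that at leading order the densities are spanned by the characteristic functions $\chi_{\partial D_i}$, so projecting the integral equations onto this span and integrating over each $\partial D_i$ collapses the infinite-dimensional boundary problem to the finite-dimensional eigenvalue problem governed by the gauge capacitance matrix \eqref{capacitancedef}. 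Tracking the scaling, the leading-order solvability condition is precisely that $\omega^2$ equal an eigenvalue $\lambda_n$ of $\mathcal{C}_N^\gamma$ (which itself absorbs the prefactor $\delta$ and the volume/velocity factors), giving \eqref{approxlambda} after taking square roots, while the null-vector of the reduced system is the eigenvector $v_n$. Substituting $v_n$ back through the representation \eqref{eq:ux_layerpotential} reconstructs the eigenmode \eqref{eigenmodeapprox}, with the densities $\psi_i = (\tilde{\mathcal{S}}_{\mathcal{D},\gamma}^0)^{-1}[\chi_{\partial D_i}]$.

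I expect the main obstacle to be the rigorous control of the remainder terms, in two respects. First, one must verify that the leading-order reduction is nondegenerate, i.e.\ that $\tilde{\mathcal{S}}_{\mathcal{D},\gamma}^0$ is invertible (injectivity is asserted after \eqref{eq:S}) and that the projected finite-dimensional problem is well-conditioned uniformly as $\delta \to 0$, so that a Gohberg--Sigal (generalised Rouch\'e) continuation argument applies and promotes the leading-order roots to exact resonances with an $O(\delta)$ error. Second, the non-Hermitian gauge term $\gamma \partial_1$ breaks the self-adjointness usually exploited in such estimates, so the uniform invertibility and the error bounds must be established without spectral symmetry, using instead the explicit gauge structure of $G_\gamma^k$ and of $\mathcal{C}_N^\gamma$ directly. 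Once these uniform bounds are secured, the $O(\delta)$ errors in \eqref{approxlambda} and \eqref{eigenmodeapprox} follow from standard resolvent perturbation.
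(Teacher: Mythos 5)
Your proposal is essentially correct, but note that this paper does not prove the statement at all: Theorem \ref{thm:approx} is recalled verbatim from \cite[Theorem 4.4]{ammari2023nonhermitianskineffectthreedimensional}, and your outline (layer-potential ansatz, low-frequency expansion of the gauged and ungauged single-layer potentials, projection onto the span of the densities $(\tilde{\mathcal{S}}_{\mathcal{D},\gamma}^0)^{-1}[\chi_{\partial D_i}]$ to obtain the capacitance eigenvalue problem, and a Gohberg--Sigal argument to promote leading-order roots to exact resonances with $O(\delta)$ error) is precisely the strategy used in that reference. Since you reproduce the standard approach of the cited source, there is nothing further to compare.
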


We now consider an infinitely periodic system with one resonator in a unit cell spaced along the lattice $\Lambda$, which corresponds to am infinite sequence of uniformly spaced resonators.
The quasiperiodic Green's function outside the resonators is defined as
\begin{equation}\label{eq:Galph}
    G^{\alpha, k}(x, y):=-\sum_{m \in \Lambda} \frac{e^{\mathrm{i} k|x-y-m|}}{4 \pi|x-y-m|} e^{\mathrm{i} \alpha \cdot m},
\end{equation}
together with the quasiperiodic single layer potential
\begin{equation}
    \mathcal{S}_D^{\alpha, k }[\phi](x):=\int_{\partial D} G^{\alpha, k}(x, y) \phi(y) \mathrm{d} \sigma(y), \quad x \in \mathbb{R}^3,
\end{equation}
for $\phi \in L^2(\partial D)$. 
The lattice sum in \eqref{eq:Galph} converges uniformly for $x$ and $y$ in compact sets of $\R^3$, $x\neq y$,  and $k \neq |\alpha + q|$ for all $q\in \Lambda^*$.
The ``real-space'' capacitance representation is defined through the inverse Floquet–Bloch transform and can be written as
\begin{equation}\label{eq:real_space_infinite_capacitance}
    \mathcal{C}^\gamma_{i,j}=\frac{1}{|Y^*|}\int_{Y^*}\hat{C}^{\alpha,\gamma}e^{-\i\alpha(i-j)}\d\alpha.
\end{equation}
Here, $Y^*:= \R /\Lambda^*$ denotes the Brillouin zone associated to the lattice $\Lambda$. Finally, $\alpha$ is the quasiperiodic Floquet parameter and $\hat{C}^{\alpha,\gamma}$ is the quasiperiodic capacitance representation defined as
\begin{equation}
    \hat{C}^{\alpha,\gamma} = -\frac{\delta v^2}{\int_D e^{\gamma x_1}\d x}\int_{\partial D} e^{\gamma x_1}(\mathcal{S}^{\alpha,0}_D)^{-1}[\chi_{\partial D}](x)\d \sigma(x).
\end{equation}
It is clear from the definition \eqref{eq:real_space_infinite_capacitance} that $\mathcal{C}^\gamma$ is a Toeplitz matrix with symbol function $\hat{C}^{-\alpha, \gamma}$.

\subsection{Toeplitz structure and non-Hermiticity of the gauge capacitance matrix}
Here, we explain how we relate the non-Hermiticity and the asymptotic properties of $\mathcal{C}^\gamma_N$ to the general and abstract framework of dense or banded Toeplitz operators. Moreover, we illustrate how spectral analysis of Toeplitz operators developed in Section \ref{sec: Toeplitz theory} can effectively predict the localisation properties of eigenmodes in a three dimensional resonator chain.

It is known since the works of Maxwell that the coefficient $\left(\mathcal{C}_N^\gamma\right)_{i,j}$ can be interpreted as the coupling between resonators $i$ and $j$. In the present setting of a $3D$ resonator chain, the coupling between the resonators is relativity strong, only decaying algebraically in space.  
Following \cite[Proposition 5.9.]{ammari2023nonhermitianskineffectthreedimensional} the entries of the capacitance matrix are known to decay at least algebraically, such that
\begin{equation}
    \bigl\lvert (\mathcal{C}_N^\gamma)_{i,j} \bigr\rvert \leq \frac{\delta K}{|i-j|},
\end{equation}
for some constant $K$ which is independent of $N$.
The first order derivative term in \eqref{helmholtz} introduces an asymmetry in the coupling between the resonators along the axis of the lattice. It is responsible for the non-Hermiticity of $\mathcal{C}^\gamma_N$. We still emphasise that this asymmetry does not lead to different decay rates of the matrix entries away from the main diagonal. Numerically, we find that the off-diagonal decay rates introduced in \eqref{eq: algebraic off-diagonal decay coefficients} satisfy $p \approx q \approx 1.4$.

We now need to explain in which sense the convergence of the finite system to the infinite occurs. Let $\mathcal{D}_N$ denotes a finite chain of $N$ equidistant resonators and let $\mathcal{C}^\gamma_N$ be the corresponding gauge capacitance matrix defined in \eqref{capacitancedef}. Now consider $\mathcal{D}_M$ a larger chain than $\mathcal{D}_N$ and $\mathcal{C}^\gamma_M$ the associated capacitance matrix. We extract the embedded $N\times N$ central block of $\mathcal{C}^\gamma_M$ and denote it $\tilde{\mathcal{C}}^\gamma_N$. Then this truncated capacitance satisfies the following convergence result \cite[Theorem 5.2.]{ammari2023nonhermitianskineffectthreedimensional}, for all $i,j=1,\cdots, N$, 
$$\lim_{M\to \infty}(\tilde{\mathcal{C}}^\gamma_N)_{i,j}=(\mathcal{C}^\gamma)_{i,j},$$
where $\mathcal{C}^\gamma$ is the Toeplitz operator defined in \eqref{eq:real_space_infinite_capacitance} corresponding to a proper infinite resonator chain.
Effectively, as the matrix size increases, possible edge effects due to the open boundary become negligible. In other words, large capacitance matrices may be accurately approximated by proper Toeplitz matrices.
We subsequently approximate the capacitance matrix by a Toeplitz matrix in the following way.
We first compute a sufficiently large gauge capacitance matrix of size $M$, for example $M=100$. Then, we extract the central block of size $N\times N$ and compute a regularisation to get an exact Toeplitz matrix, which means that we replace each entry with the average of the associated diagonal.

\begin{figure}[h]
    \centering
    \subfloat[][Convergence of our Toeplitz construction from gauge capacitance matrices in relative Frobenius norm.]%
    {\includegraphics[width=0.485\linewidth]{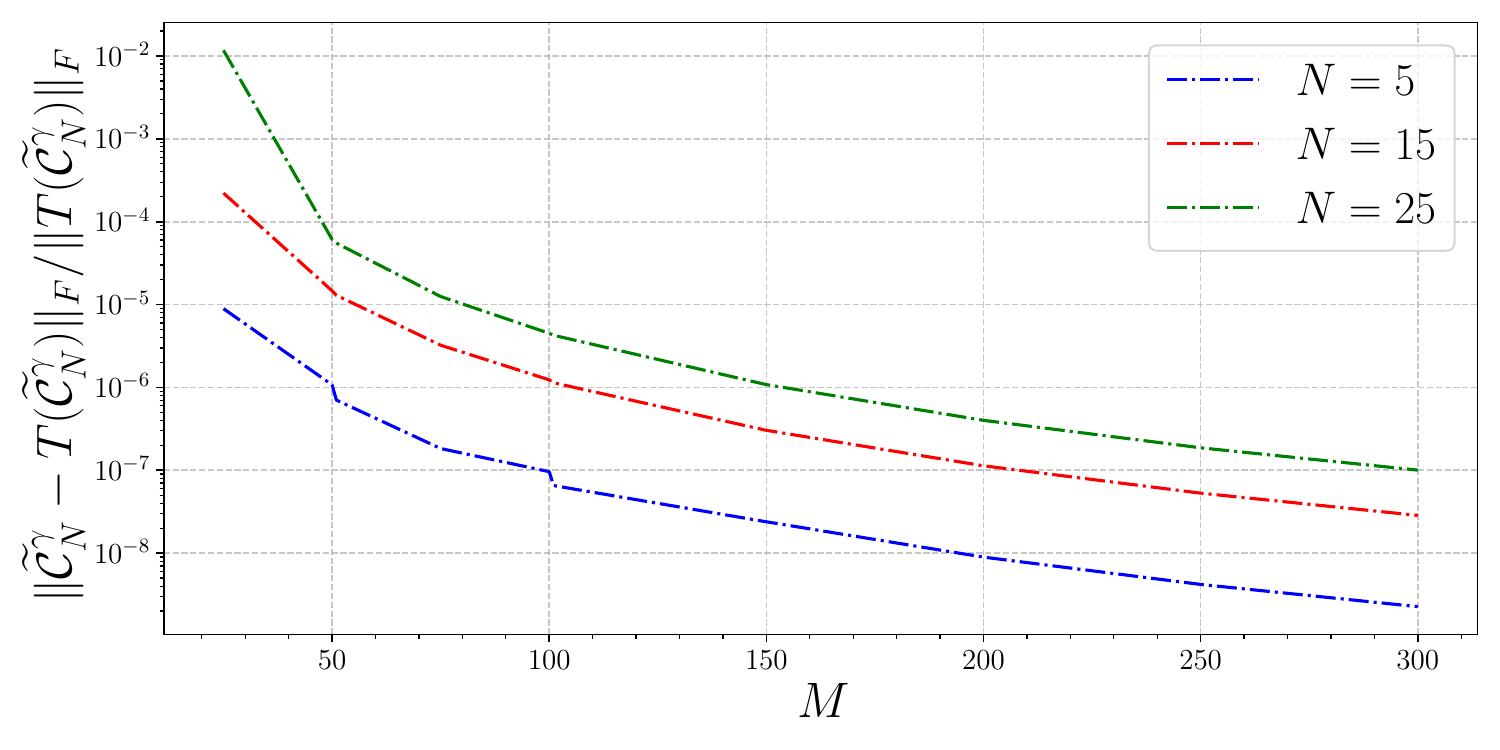}}\quad
    \subfloat[][Scattering and algebraic fit of the upper and lower entries of the capacitance matrix Toeplitz approximation.]%
    {\includegraphics[width=0.485\linewidth]{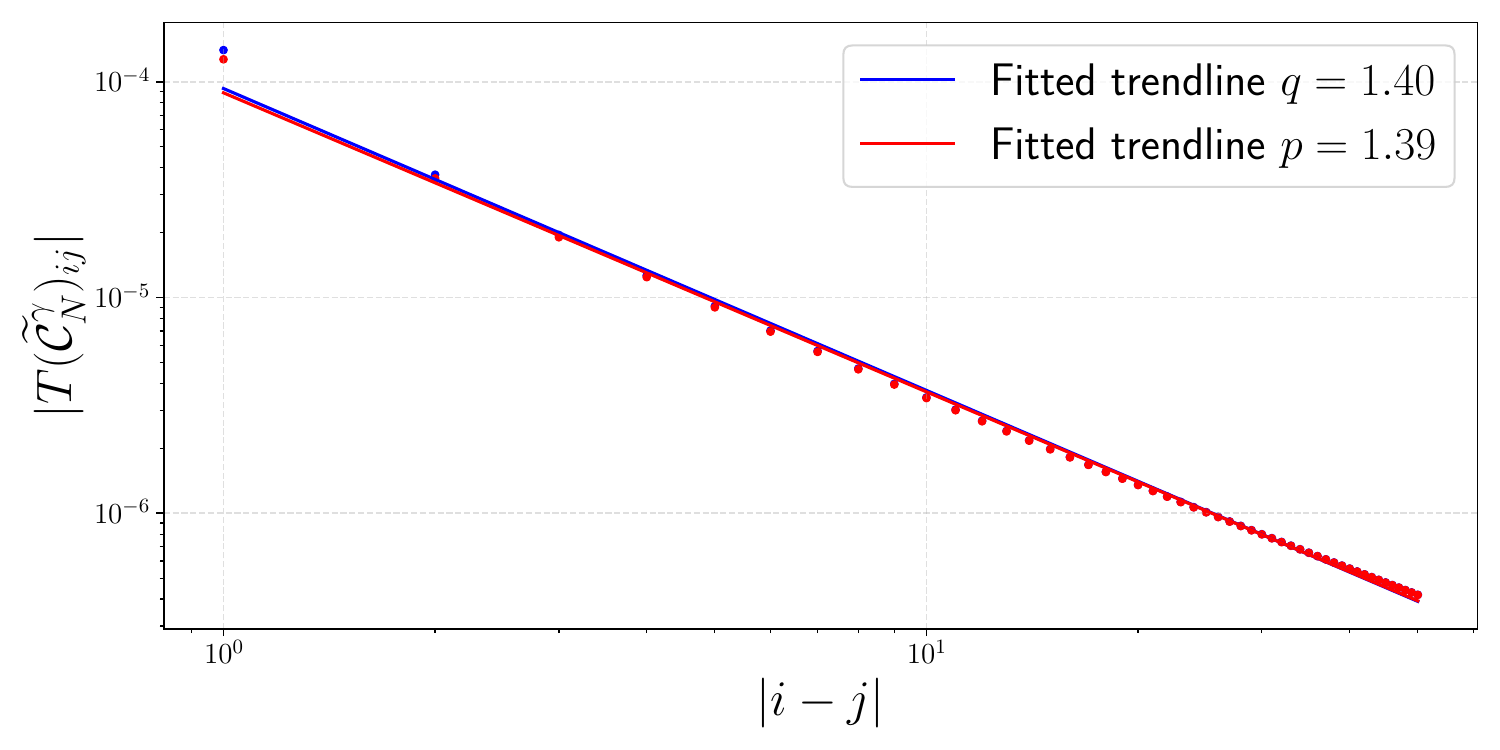}}
    \caption{Our construction method is consistent with the abstract framework developed above. For sufficiently large $M$, the extracted block $\widetilde{\mathcal{C}}^\gamma_N$ is close to its best Toeplitz approximation. It allows to get rid of the edge effects.}
    \label{Fig:Toeplitz construction of capacitance matrices}
\end{figure}

\subsection{Defect induced localisation in finite resonator chain}\label{Sec: defect modes}
In the present section, we will investigate how the localisation properties change in the presence of defects. The resonator chain comprises a defect, which is under the form of a changed wave speed. Notably, the position of the resonators within the chain, as well as their geometry is unaffected by the defect. Such a chain is depicted in Figure \ref{fig:3D-resonators-1D-chain}.

\begin{figure}[H]
    \centering
    \includegraphics[width=0.60\linewidth]{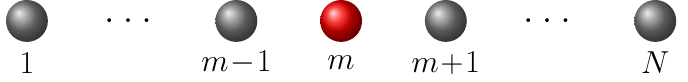}
    \caption{Finite $1D$ chain of $3D$ spherical resonators with a defect at the index $m$. The defect comprises a changed wave speed and leaves the geometry of the chain unaffected.}
    \label{fig:3D-resonators-1D-chain}
\end{figure}

The changed wave speed on the $m$-th resonator is captured by the diagonal matrix $\Bf \in \R^{N\times N}$, given by
\begin{equation}\label{eq: defect matrix}
    \Bf := \begin{cases}
        1, & i \neq m, \\
        1 + \eta, & i = m,
    \end{cases}
\end{equation}
for some constant $\eta \in (-1, \infty)$. Localised modes are associated to eigenfrequencies $\omega^2 \in \sigma(\Bf\mathcal{C}_N^\gamma)$ such that $\omega^2 \not\in \sigma(\mathcal{C}_N^\gamma)$. Defective materials are characterised by the following eigenvalue problem,

\begin{equation}\label{eq: defect resonance problem}
    \Bf \mathcal{C}_N^\gamma \vect{u} - \lambda \vect{u} = 0
\end{equation}

\begin{theorem}\label{thm: discrete Greens function defect mode}
    Consider a defected structure with a defect in the $m$-th resonator. Let $\omega^2 \in \sigma(\Bf \mathcal{C}_N^\gamma)$ but $\omega^2 \not\in\sigma(\mathcal{C}_N^\gamma)$, then up to scaling the discrete Green's function
    \begin{equation}
        \vect{u} = (\mathcal{C}_N^\gamma - \omega^2\Id)^{-1}\delta_m
    \end{equation}
    is an eigenmode of \eqref{eq: defect resonance problem}.
\end{theorem}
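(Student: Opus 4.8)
The plan is to exploit the rank-one structure of the defect. The matrix $\Bf$ from \eqref{eq: defect matrix} differs from the identity only in its $m$-th diagonal entry, so I would first write $\Bf = \Id + \eta\,\delta_m\delta_m^\top$, where $\delta_m$ is the $m$-th standard basis vector. Writing $\lambda = \omega^2$ and setting $\vect{w} := (\mathcal{C}_N^\gamma - \lambda\Id)^{-1}\delta_m$, which is well-defined and nonzero because $\lambda\notin\sigma(\mathcal{C}_N^\gamma)$, I would substitute directly into \eqref{eq: defect resonance problem}. Using $\mathcal{C}_N^\gamma\vect{w} = \lambda\vect{w} + \delta_m$ together with $\Bf\delta_m = (1+\eta)\delta_m$ and $\Bf\vect{w} = \vect{w} + \eta w_m\delta_m$, where $w_m := \delta_m^\top\vect{w}$ denotes the $m$-th component, a short computation gives
\begin{equation}
    \Bf\mathcal{C}_N^\gamma\vect{w} - \lambda\vect{w} = \bigl(\eta\lambda w_m + 1 + \eta\bigr)\,\delta_m.
\end{equation}
Hence $\vect{w}$ solves the defect problem exactly when the scalar $\eta\lambda w_m + 1 + \eta$ vanishes.

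The second step is to show that this scalar is forced to vanish by the hypothesis $\lambda\in\sigma(\Bf\mathcal{C}_N^\gamma)$. I would factor
\begin{equation}
    \Bf\mathcal{C}_N^\gamma - \lambda\Id = (\mathcal{C}_N^\gamma - \lambda\Id)\bigl[\Id + \eta(\mathcal{C}_N^\gamma - \lambda\Id)^{-1}\delta_m\delta_m^\top\mathcal{C}_N^\gamma\bigr],
\end{equation}
and, since the first factor is invertible, apply the matrix determinant lemma to the rank-one update in the bracket. Combined with the resolvent identity $\mathcal{C}_N^\gamma(\mathcal{C}_N^\gamma - \lambda\Id)^{-1} = \Id + \lambda(\mathcal{C}_N^\gamma - \lambda\Id)^{-1}$, this produces the secular equation
\begin{equation}
    \det\bigl(\Bf\mathcal{C}_N^\gamma - \lambda\Id\bigr) = \det\bigl(\mathcal{C}_N^\gamma - \lambda\Id\bigr)\,\bigl(1 + \eta + \eta\lambda w_m\bigr).
\end{equation}
Since $\lambda\in\sigma(\Bf\mathcal{C}_N^\gamma)$ makes the left-hand side zero while $\det(\mathcal{C}_N^\gamma - \lambda\Id)\neq 0$, the factor $1 + \eta + \eta\lambda w_m$ must vanish. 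This is precisely the coefficient from the first step, so $\Bf\mathcal{C}_N^\gamma\vect{w} = \lambda\vect{w}$ and $\vect{w}$ is an eigenmode; as it spans a one-dimensional solution space, any eigenmode at $\lambda$ is a scalar multiple of it, giving the ``up to scaling'' conclusion.

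The main obstacle I anticipate is purely in the bookkeeping of the determinant factorisation: one must check that the scalar delivered by the matrix determinant lemma coincides exactly with the coefficient $\eta\lambda w_m + 1 + \eta$ obtained from the direct substitution, and it is the resolvent identity that forces this match. A cleaner route that sidesteps the determinant computation altogether is to argue in reverse. Starting from any eigenvector $\vect{u}$ of $\Bf\mathcal{C}_N^\gamma$ at $\lambda$, expanding $\Bf = \Id + \eta\delta_m\delta_m^\top$ yields $(\mathcal{C}_N^\gamma - \lambda\Id)\vect{u} = -\eta c\,\delta_m$ with $c := \delta_m^\top\mathcal{C}_N^\gamma\vect{u}$. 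Inverting the resolvent gives $\vect{u} = -\eta c\,(\mathcal{C}_N^\gamma - \lambda\Id)^{-1}\delta_m$, and $\eta c \neq 0$ follows because otherwise $\vect{u}\in\ker(\mathcal{C}_N^\gamma - \lambda\Id) = \{0\}$; this exhibits $\vect{u}$ directly as a nonzero multiple of the discrete Green's function.
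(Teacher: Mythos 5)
Your proposal is correct, and in fact contains two complete arguments. Your primary route (direct substitution of $\vect{w} = (\mathcal{C}_N^\gamma - \lambda\Id)^{-1}\delta_m$, yielding $\Bf\mathcal{C}_N^\gamma\vect{w} - \lambda\vect{w} = (\eta\lambda w_m + 1 + \eta)\delta_m$, followed by the matrix determinant lemma to force that scalar to vanish) is genuinely different from the paper's proof. The ``cleaner route'' you sketch in your last paragraph \emph{is} the paper's proof: the paper writes $\Bf = \Id + \eta\,\delta_m\delta_m^\top$, rearranges the eigenvalue equation for an eigenvector $\vect{u}$ into $(\mathcal{C}_N^\gamma - \omega^2\Id)\vect{u} = -\eta c\,\delta_m$ with $c$ a scalar, and inverts the resolvent to conclude $\vect{u} = -K(\mathcal{C}_N^\gamma - \omega^2\Id)^{-1}\delta_m$. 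Comparing the two: the reverse argument is shorter, needs only invertibility of the resolvent, and immediately gives that \emph{every} eigenmode at $\lambda$ is a multiple of the discrete Green's function, i.e.\ geometric multiplicity one --- which your forward argument alone does not deliver (your claim that $\vect{w}$ ``spans a one-dimensional solution space'' is only justified by the reverse argument you append, so the two halves of your proposal are genuinely complementary). What the forward route buys in exchange is the secular equation $1 + \eta + \eta\lambda w_m = 0$, which characterises exactly which $\lambda \notin \sigma(\mathcal{C}_N^\gamma)$ occur as defect eigenvalues; this is extra quantitative information the paper's proof does not produce. Two small points where you are more careful than the paper: you verify $\eta c \neq 0$ (so the multiple is genuinely nonzero; the paper's constant $K$ is not checked to be nonzero), and your scalar $c = \delta_m^\top\mathcal{C}_N^\gamma\vect{u}$ correctly uses the $m$-th \emph{row} of $\mathcal{C}_N^\gamma$, whereas the paper's notation $(\mathcal{C}_N^\gamma\delta_m)\cdot\vect{u}$ reads as the $m$-th column --- a distinction that matters since $\mathcal{C}_N^\gamma$ is non-Hermitian.
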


\begin{proof}
    Note that \eqref{eq: defect matrix} may alternatively be written as
    \begin{equation}
        \Bf = \Id + \eta e_me_m^\top
    \end{equation}
    A direct computation yields,
    \begin{equation}
        \Bf \mathcal{C}_N^\gamma \vect{u} - \omega^2\vect{u} = 0 \Leftrightarrow \mathcal{C}_N^\gamma \vect{u} + \eta \delta_m (\mathcal{C}_N^\gamma \delta_m)\vect{u} - \omega^2 \vect{u} = 0\Leftrightarrow \vect{u} = -(\mathcal{C}_N^\gamma - \omega^2\Id)^{-1} \delta_m(\mathcal{C}_N^\gamma \delta_m)\vect{u}.
    \end{equation}
    Since $(\mathcal{C}_N^\gamma \delta_m)\cdot\vect{u}\in \R$ is a scalar, the defect eigenmode is given by
    \begin{equation}
        \vect{u} = - K(\mathcal{C}_N^\gamma - \omega^2\Id)^{-1}\delta_m
    \end{equation}
    for some constant $K \in \R$.
\end{proof}

We will now proceed to illustrate the decay transition numerically within a resonator chain, similar to the simulations carried out in Figure \ref{fig:SpectralDecomposition}.

\begin{figure}[htb]
    \centering
    \subfloat[][$8$-banded approximation of the capacitance matrix.]%
    {\includegraphics[width=0.45\linewidth]{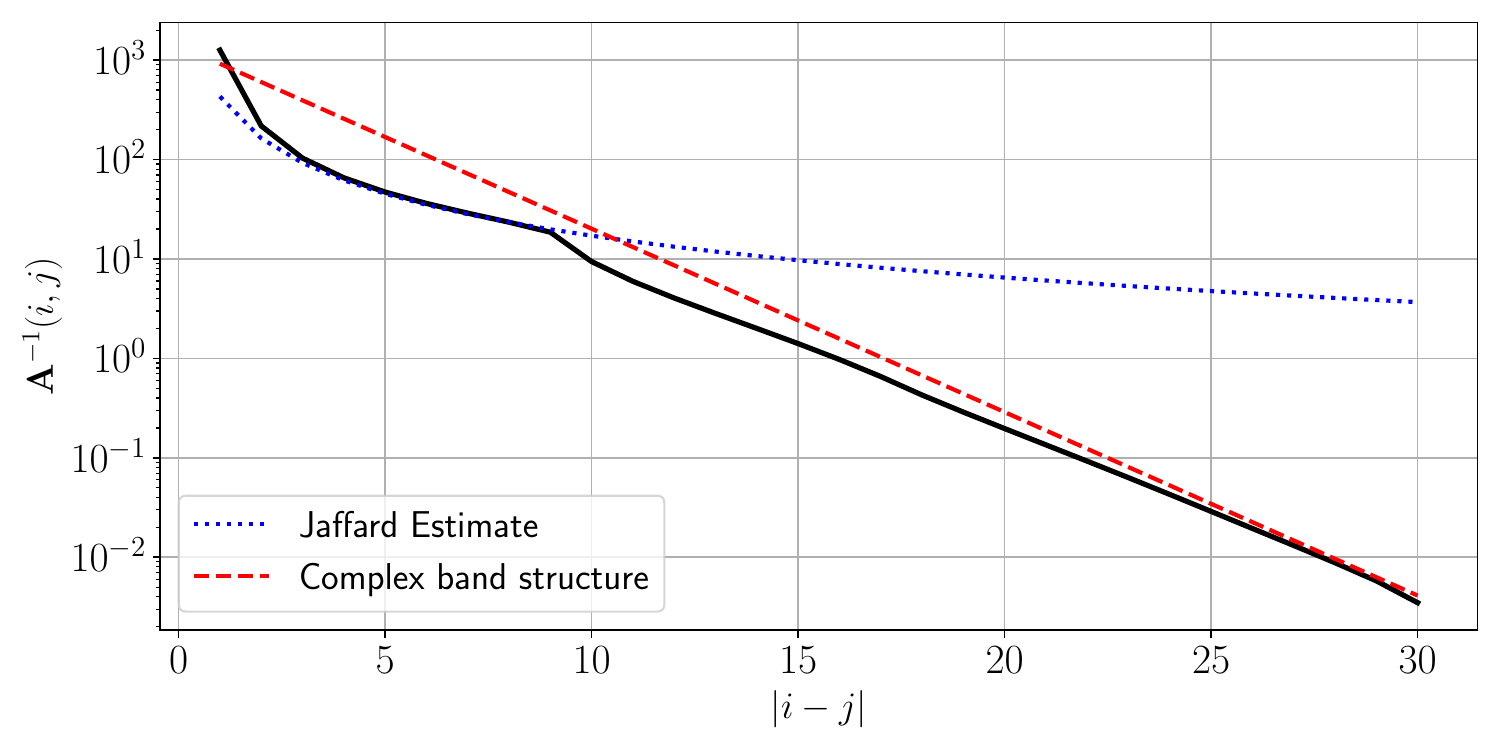}}\quad
    \subfloat[][$20$-banded approximation of the capacitance matrix.]%
    {\includegraphics[width=0.45\linewidth]{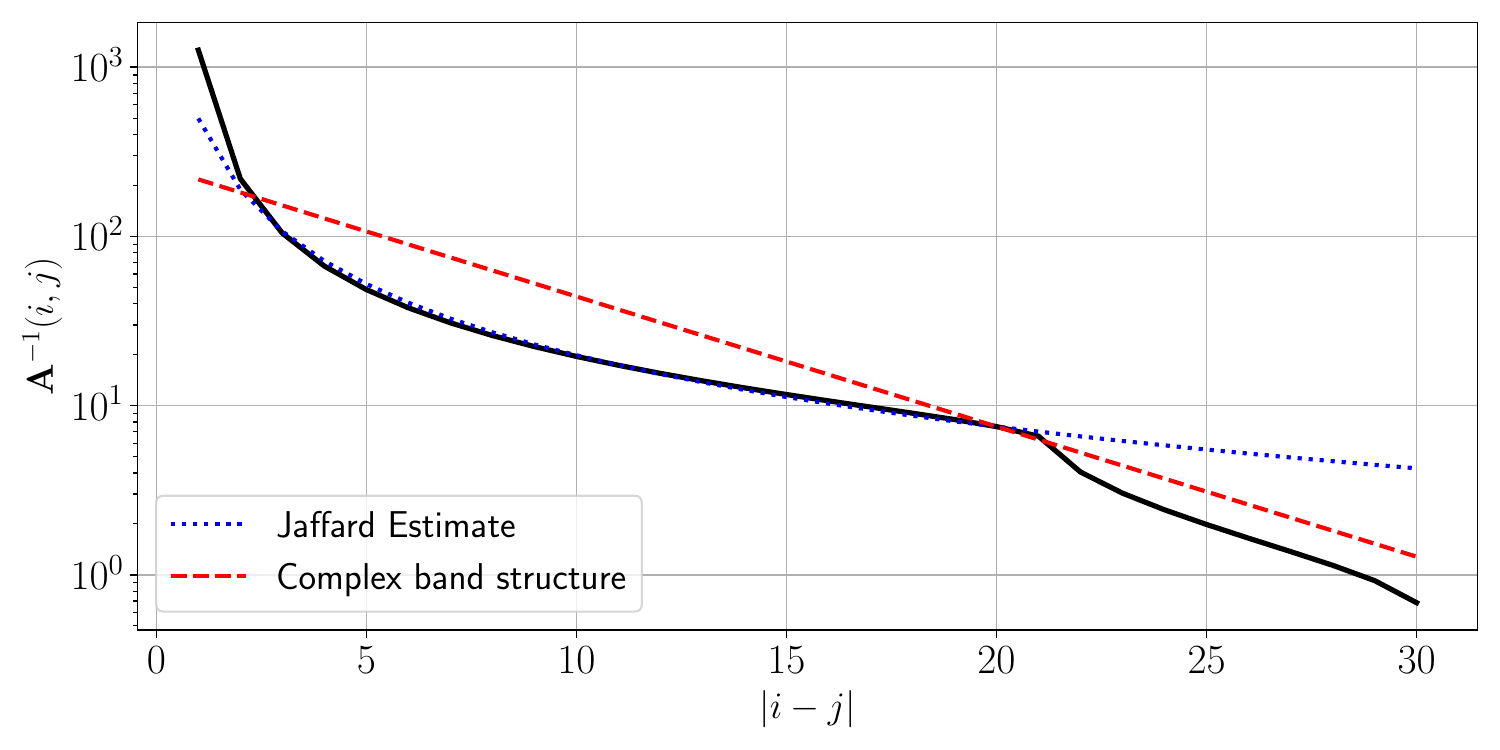}}
    \caption{Defect eigenmode corresponding to \eqref{eq: defect resonance problem}. As predicted by Theorem \ref{thm: eigenvector banded Toeplitz operator} the exponential decay rate predicted by the complex band structure decreases, the more bands are considered. Consequently, Jaffard's estimate from Theorem \ref{thm: jaffard off diagonal} stays valid for longer, illustrating that the localisation observed strongly depends on the number bands considered.}
    \label{Fig: Bandwidth difference}
\end{figure}

We will now proceed to illustrate the defect induced decay transition in three dimensional resonator chains and numerically illustrate the defect eigenmode behaviour of \eqref{eq: defect resonance problem}.

\begin{figure}[h]
    \centering
    \includegraphics[width=0.95\linewidth]{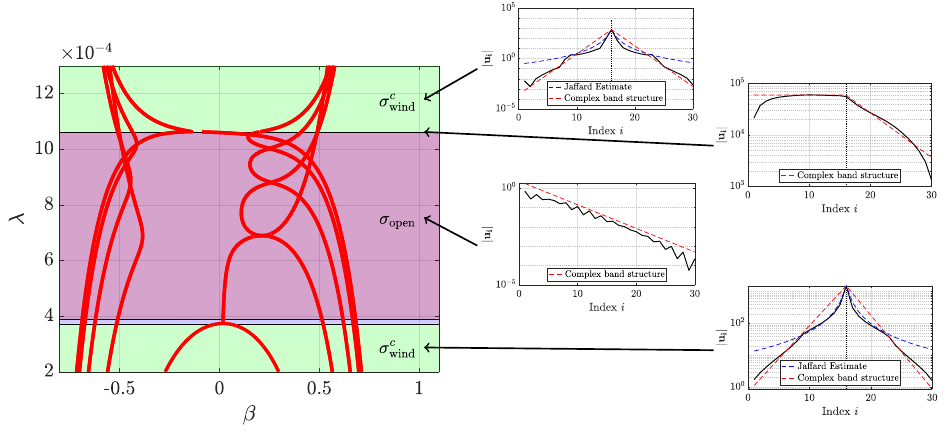}
    \caption{Similar illustration as in Figure \ref{fig:SpectralDecomposition}. The red lines denote the complex band structure for a $8$-banded Toeplitz approximated capacitance matrix. The red region denotes the open spectrum $\sigma_{\text{open}}$ defined in \eqref{def: open spectrum}. Eigenvectors for eigenvalues in the spectrum are exponentially skin localised, and the exponential decay rate is predicted by the complex band structure. The green shaded region $\sigma_{\mathrm{wind}}^\mathsf{c}$ denotes the eigenvalues for which Jaffard's theorem is applicable. 
    The winding region $\sigma_{\mathrm{wind}}$ which fully contains $\sigma_{\text{open}}$, almost fully agrees with $\sigma_{\text{open}}$ and is therefore almost not visible in the Figure.
    At the transition between $\sigma_{\text{open}}$ and $\sigma_{\text{open}}^\mathsf{c}$, the eigenmodes are constant leading up to the defect.}
    \label{fig:SpectralDecompositionCapacitance}
\end{figure}

Note that the computations in Figure \ref{fig:SpectralDecompositionCapacitance} were carried out for an $8$-banded capacitance matrix approximation. For a defect in the green region, the eigenmodes become exponentially localised beyond eight entries from the defect site. Consequently, in a fully coupled system, the defect mode would exhibit only algebraic localisation around the defect site, as in Figure \ref{fig:SpectralDecomposition}, highlighting that the coupling length drastically affects the quantitative defect eigenmode behaviour.

\section{Concluding Remarks}\label{Sec: concluding remarks}
Our article illustrates the localisation properties observed in non-Hermitian systems with long-range coupling length. We show that the complex band structure naturally characterises  localisation transitions of such systems, where defect eigenmodes transition from skin localised (governed by the complex band structure) to bulk localised (governed by  Jaffard type estimates). We present a convergent method to construct algebraically good pseudoeigenvectors for $m$-banded and dense Toeplitz matrices with algebraic off-diagonal decay. This highlights the nature of the eigenmodes: a finite matrix of size $m\times m$ exhibits exponentially decaying eigenmodes akin to the $m$-banded (infinite) Toeplitz operator, although the decay rate vanishes as $m$ grows.

For this paper, many of the numerical illustrations assumed a real-valued limiting spectra $\sigma_{\text{open}}$, though the analytical result hold for any complex-valued eigenvalues. In an upcoming paper, we will give a more concise characterisation along with a criterion for when the limiting spectra of banded Toeplitz matrices are real, which is of particular relevance since many Toeplitz operators arising from physical models possess real limiting spectra.

\section{Data availability} \label{Sec: Data availability}
The \texttt{Matlab} code for the numerical experiments developed in this work is openly available in the following repository:
\url{https://github.com/yannick2305/BandedToeplitz}.

\printbibliography

\end{document}